\documentclass[english,12pt,reqno,twoside]{amsart}
\usepackage[utf8]{inputenc}
\usepackage[T1]{fontenc}
\usepackage{amsmath}
\usepackage{amsthm}
\usepackage{amssymb}
\usepackage{amsfonts}

\usepackage{soul,color}
\usepackage{t1enc}
\usepackage{a4,indentfirst,latexsym,graphics,graphicx}
\usepackage{mathrsfs}
\usepackage{enumitem}
\usepackage[colorlinks=true,urlcolor=blue,
citecolor=red,linkcolor=blue,linktocpage,pdfpagelabels,
bookmarksnumbered,bookmarksopen]{hyperref}
\usepackage[english]{babel}
\usepackage[left=2.1cm,right=2.1cm,top=2.72cm,bottom=2.72cm]{geometry}

\usepackage[metapost]{mfpic}
\usepackage[hyperpageref]{backref}
\usepackage[colorinlistoftodos]{todonotes}
\usepackage{verbatim}

\definecolor{iceberg}{rgb}{0.44, 0.65, 0.82}
\newcounter{todocounter}

\hypersetup{
  linkbordercolor=cyan
}

\usepackage[normalem]{ulem}
\usepackage{braket}

\usepackage{mathtools} 
\mathtoolsset{showonlyrefs}

\usepackage{bbm}
\usepackage[noadjust]{cite}
\usepackage{url}
\usepackage{lmodern}

\usepackage[useregional]{datetime2}

\usepackage{fixmath}

\numberwithin{equation}{section}
\newtheorem{Th}{Theorem}[section]
\newtheorem{Prop}[Th]{Proposition}
\newtheorem{Lem}[Th]{Lemma}

\newtheorem{Def}[Th]{Definition}
\newtheorem{Rem}[Th]{Remark}

\newcommand{\C}{\mathbb{C}}
\newcommand{\R}{\mathbb{R}}

\newcommand{\N}{\mathbb{N}}

\newcommand{\cD}{{\mathcal D}}
\newcommand{\cE}{{\mathcal E}}

\newcommand{\cI}{{\mathcal I}}

\newcommand{\divv}{\mathrm{div}}

\newcommand{\loc}{\mathrm{loc}}

\newcommand{\rad}{\mathrm{rad}}

\newcommand{\triple}[1]{{\left\vert\kern-0.25ex\left\vert\kern-0.25ex\left\vert #1 
\right\vert\kern-0.25ex\right\vert\kern-0.25ex\right\vert}}

 \def\dd{\, {\rm d}}

\allowdisplaybreaks

\begin{document}

\title[Existence and regularity for a Grushin-Choquard equation]{Existence and regularity for an entire Grushin-Choquard equation}
 \author{Federico Bernini and Paolo Malanchini}

  \address[Federico Bernini]{\newline\indent Dipartimento di Matematica e Applicazioni,
          \newline\indent Università degli Studi di Milano-Bicocca,
          \newline\indent Via R. Cozzi, 55 - I-20125 Milan, Italy}
          \email{\href{mailto:federico.bernini@unimib.it}{federico.bernini@unimib.it}}
  \address[Paolo Malanchini]{\newline\indent Dipartimento di Matematica e Applicazioni,
          \newline\indent Università degli Studi di Milano-Bicocca,
          \newline\indent Via R. Cozzi, 55 - I-20125 Milan, Italy}
          \email{\href{mailto:p.malanchini@campus.unimib.it}{p.malanchini@campus.unimib.it}}

\subjclass[2020]{
35J70,
%Degenerate elliptic equations
35J20,
%Variational methods for second-order elliptic equations
35B65.
%Smoothness and regularity of solutions to PDEs
}

\keywords{Grushin operator, Grushin-Choquard equation, $L^\infty$-estimate.}

\begin{abstract}
    We consider the following Choquard equation
    $$
        -\Delta_\gamma u + u = \left(d(z)^{-\mu} \ast |u|^p\right)|u|^{p-2}u, \text{ in } \R^N,
    $$
    where $\Delta_\gamma$ is the Grushin operator. For a suitable range of the parameter $p$ we prove the existence of a mountain pass solution of the equation and we establish that the solution belongs to $L^q(\R^N)$ for all $q\in [2,\infty]$ and to $C^{0,\alpha}_\loc(\R^N)$ for some $\alpha \in (0,1)$. Additionally, we provide a Poho\v zaev type identity, which allows us to derive a nonexistence result for smooth solutions to our equation.
\end{abstract}

\maketitle
\begin{center}
\begin{minipage}{11cm}
    \tableofcontents
\end{minipage}
\end{center}

\section{Introduction}
The study of differential operators with degenerate coefficients has become an increasingly important research topic in mathematical analysis in recent years. This class of operators can be seen as a bridge between the classical theory of uniformly elliptic operators and hyperbolic operators, introducing intrinsically more complex geometries and functional structures. The most famous and extensively studied prototype is what is known today as the Grushin (or Baouendi-Grushin) operator, introduced by Baouendi \cite{baouendi} and Grushin \cite{grushin}. For the readers' convenience, we briefly recall its construction: if we split the Euclidean space $\R^N$ as $\R^m \times \R^\ell$, where $m \geq 1$, $\ell \geq 1$ satisfy $m+\ell=N$, a generic point $z \in \R^N$ can be written as
\begin{equation}
	z = \left(x,y\right) = \left( x_1,\ldots,x_m,y_1,\ldots,y_\ell \right),
\end{equation}
where $x \in \R^m$ and $y \in \R^\ell$. Given a nonnegative $\gamma \in \R$, the Grushin operator $\Delta_\gamma$ is defined  by
\begin{equation}
	\Delta_\gamma u(z) = \Delta_x u(z) + |x|^{2\gamma} \Delta_y u (z),
\end{equation} 
where $\Delta_x$ and $\Delta_y$ are the Laplace operators in the $x$ and $y$ variables, respectively. We also define the Grushin gradient 
\begin{equation}
	\nabla_\gamma u (z) =  (\nabla_x u(z), |x|^\gamma\nabla_y u(z))= (\partial_{x_1} u(z) , \dots, \partial_{x_m}u(z), |x|^\gamma \partial_{y_{1}}u(z), \dots, |x|^\gamma \partial_{y_{\ell}}u (z)).
\end{equation}
A crucial property of this operator is that it is \textit{not uniformly elliptic} in $\R^N$, since it is degenerate on the subspace $\{0\} \times \R^\ell$.
The number
$$
    N_\gamma \coloneqq m + (1+\gamma)\ell
$$
is called the \emph{homogeneous dimension} associated to the decomposition $N=m+\ell$.

One source of interest in Grushin-type operators arises from the fact that they have a strong connection with the Heisenberg sub-Laplacian if $\gamma=1$. Indeed, for $\gamma=1$, Jerison and Lee (see \cite{JL1,JL2}) studied the interplay between the equation
\begin{equation}
    \label{JL:eq}
    \Delta_\gamma u = -u^{\frac{N_\gamma+2}{N_\gamma-2}} \text{ in } \R^N,
\end{equation}
with the Yamabe problem, and, as observed in \cite{MM06}, the model space for \eqref{JL:eq} is the Heisenberg group $\C^N \times \R$. In this space, the Yamabe problem (for the Webster curvature) becomes
\begin{equation}
    \Delta_H u = -u^{\frac{N_\gamma+2}{N_\gamma-2}} \text{ in } \R^N,
\end{equation}
with $\Delta_H$ denoting the Heisenberg sub-Laplacian.

Moreover, the Grushin operator falls into the class of $X$-elliptic operators introduced in \cite{KL00}. In fact, the Grushin operator is uniformly $X$-elliptic with respect to the family of vector fields $X=(X_1,\dots, X_N)$ defined as
\begin{equation}
    X_i = \frac{\partial}{\partial x_i} \,\,\hbox{for $i=1,\ldots,m$} \qquad
    X_{m+j} = \vert x \vert^\gamma \frac{\partial}{\partial y_j} \,\,\hbox{for $j=1,\ldots,\ell$}.
\end{equation}
With this notation, we can write $\Delta_\gamma = \sum_{j=1}^{N} X_j^2$. The fact that the Grushin operator belongs to the class of $X$-elliptic operators will be important in the proof of one of the main results of the present work (see Theorem \ref{thm_reg}).
We point out that, if $\gamma\in\N$, the $C^\infty$-field $X=\left( X_1,\ldots,X_N \right)$ satisfies the H\"{o}rmander's finite rank condition, that is, see \cite{hormander1967}, the Lie algebra generated by the vector fields has constant dimension $N$.

\medskip 

Recently, growing attention has been devoted to the study of PDEs governed by such subelliptic operators. Most of the literature has mainly focused on linear aspects, such as the existence of the heat kernel (see \cite{GarofaloTralli}), asymptotic estimates (see \cite{L19}) and unique continuation properties for solutions (see \cite{abatangelo2024}).
On the other hand, from a variational perspective, the interest in these operators stems from the necessity to generalize and adapt fundamental analytical tools, e.g. such as Sobolev inequalities (see \cite{Loiudice2006}) and Hardy inequalities (see \cite{DAmbrosio2004,BBP}), to geometric contexts that lack classical Euclidean symmetries. These facts have led to a significant increase in interest in the study of nonlinear subelliptic PDEs involving Grushin-type operators: see the very recent contributions \cite{abatangelo2024,MalanchiniMolicaBisciSecchi2,MalanchiniMolicaBisciSecchi3,Loiudice26} and the references therein, where Grushin problems have been studied in different contexts. We would like to mention that the cited papers deal with equations in bounded domains: this, of course, is helpful in recovering compactness. Despite the deep understanding achieved for linear equations associated with the Grushin operator, the scenario changes radically when tackling nonlinear analysis on unbounded domains. To date, the literature on nonlinear Grushin-type PDEs in the whole space $\R^N$ is still rather limited; see \cite{AH23,AGLT24,BaldelliMalanchiniSecchi,MalanchiniMolicaBisciSecchi1}. Indeed, as in the classical case, recovering compactness typically requires arguments \`a la Lions combined with suitable group invariance (usually translation) of the equation. Unfortunately, this is not so obvious in the Grushin setting, and a classical scheme does not seem straightforward to implement. This explains why the literature on Grushin entire equations treated by variational methods is still rather limited.

\medskip

In this paper, we will focus on the following Grushin-Choquard equation
\begin{equation}
\label{GC}
    -\Delta_\gamma u + u = \left(d(z)^{-\mu} \ast |u|^p\right)|u|^{p-2}u, \quad  \text{ in } \R^N,
\end{equation}
where $N\ge 3, \mu>0, p>1$ and $d(z)$ is the Grushin distance defined as
\begin{equation}
    d(z)=(|x|^{2(\gamma+1)}+|y|^2)^\frac{1}{2(\gamma+1)}.
\end{equation}
Note that, when $\gamma = 0$, $\Delta_\gamma$ coincides with the classical Laplacian and $d(\cdot)$ is the Euclidean distance in $\R^N$ and so \eqref{GC} reduces to the classical Choquard equation, widely studied in the past decades by many authors. Given the vast literature on the subject, we refer to the important survey \cite{MVS2017} by Moroz and Van Schaftingen for a detailed description of the classical problem and the known results.

Equation \eqref{GC} is not new in the literature: indeed, in \cite{KanugoMishraMolicaBisci} the authors proved bifurcation and multiplicity results for a critical (in the sense of Hardy-Littlewood-Sobolev exponent) Grushin-Choquard equation in a bounded domain. Furthermore, it has also been studied in the whole space $\R^N$ in \cite{RL23} and the recent \cite{WCZH26}, where the authors investigate stability properties and Liouville-type theorems for a weighted Grushin–Choquard equation. 

We point out that these results address qualitative properties of solutions assuming their existence, whereas in the present paper we establish the existence of solutions by variational methods.

As already described, the main obstacle to applying classical variational methods for entire equations is primarily due to the difficulty in recovering global compactness in the whole space $\R^N$. Indeed, the search for critical points of the associated energy functional relies heavily on Sobolev embedding theorems. In the specific case of the Grushin operator, this lack of compactness is mainly due to two peculiar structural features: anisotropic dilations defined as
\begin{equation}
	\delta_t(x,y) = (t x, t^{\gamma+1}y),\quad t >0,
\end{equation}
and the breakdown of translation invariance. Unlike the classical Laplacian or the sub-Laplacian on the Heisenberg group, the Grushin operator is not invariant under translations along the $x$-directions, due to the presence of the spatial weight $|x|^{2\gamma}$. This structural asymmetry prevents the standard application of well-known tools used to recover compactness on unbounded domains, such as Lions' Concentration-Compactness Principle.

To overcome these limitations, the current literature often works within specific weighted Sobolev spaces or restricts the search to classes of functions endowed with particular symmetries (for instance, functions that are radially symmetric in $x$ and $y$), thereby restoring the lost compactness. In this paper, we follow the second approach.

\medskip

As anticipated, we argue variationally, so we associate to \eqref{GC} the energy functional
\begin{equation}
    \label{energy:functional}
    \cE_\gamma(u)
    \coloneqq
    \frac12\int_{\R^N}\left(|\nabla_\gamma u|^2 + |u|^2\right) \dd z 
    - 
    \frac{1}{2p}\int_{\R^N} (d(z)^{-\mu} \ast |u|^p)|u|^p \dd z.
\end{equation}
We set our problem in the Grushin-Sobolev space  $H^1_\gamma(\R^N)$  defined in \eqref{H1:grushin} below. Hence, at least formally, we can give the following definition.
\begin{Def}
\label{GC:weak:sol:def}
    We say that $u\in H^1_\gamma(\R^N)$ is a {\rm weak solution} to \eqref{GC} if 
    \begin{equation}
    \label{GC:weak:formulation}
        \cE_\gamma'(u)[v]
        =
        \int_{\R^N} \left(\nabla_\gamma u \cdot \nabla_\gamma v + uv\right) \dd z 
        - 
        \int_{\R^N} (d(z)^{-\mu} \ast |u|^p)|u|^{p-2}uv \dd z = 0,
    \end{equation}
    for every $v \in H^1_\gamma(\R^N)$.
\end{Def}

The main results of this work are the following. First, we will deal with an existence result: the restriction to the radial functions is crucial in the proof of this result in order to recover compactness (cf. Lemma \ref{lemma_compact}). Then we can obtain a critical point in the whole space $H^1_\gamma(\R^N)$ thanks to the \emph{Principle of Symmetric Criticality}.
\begin{Th} \label{thm_existence}
    Let $\mu\in (0,N_\gamma)$. If 
    \begin{equation}\label{eq_cond_p}
        \frac{N_\gamma-2}{2N_\gamma-\mu}< \frac{1}{p} < \frac{N_\gamma}{2N_\gamma-\mu},
    \end{equation}
 then there exists a weak solution $u\in H^1_\gamma(\R^N)$ to \eqref{GC}.
\end{Th}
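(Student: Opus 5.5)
The plan is to apply the Mountain Pass Theorem to $\cE_\gamma$ restricted to the subspace $H^1_{\gamma,\rad}(\R^N)$ of functions that are radial separately in $x$ and in $y$. A critical point found there will then be lifted to $H^1_\gamma(\R^N)$ by the Principle of Symmetric Criticality.

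\textbf{Step 1: the nonlocal term is well defined and $C^1$.} I will use the Hardy--Littlewood--Sobolev inequality adapted to the Grushin distance (a weighted/stratified HLS on the homogeneous space $(\R^N, d, \dd z)$ of homogeneous dimension $N_\gamma$). It gives
\[
\int_{\R^N}(d(z)^{-\mu}\ast|u|^p)|u|^p\dd z\le C\|u\|_{L^{r}}^{2p},\qquad r=\frac{2N_\gamma}{2N_\gamma-\mu}.
\]
Condition \eqref{eq_cond_p} is exactly $2<pr<2^*_\gamma=\frac{2N_\gamma}{N_\gamma-2}$. The Grushin--Sobolev embedding $H^1_\gamma\hookrightarrow L^q$ for $q\in[2,2^*_\gamma]$ then makes the term bounded by $C\|u\|^{2p}$. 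Standard arguments show $\cE_\gamma\in C^1$ with derivative \eqref{GC:weak:formulation}.

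\textbf{Step 2: mountain pass geometry.} Since $p>1$, $\cE_\gamma(u)\ge\frac12\|u\|^2-C\|u\|^{2p}$, which is bounded below by a positive constant on a small sphere. For any fixed nonzero radial $u$, $\cE_\gamma(tu)\to-\infty$ as $t\to\infty$, because the nonlocal term scales like $t^{2p}$.

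\textbf{Step 3: Palais--Smale sequences.} Let $(u_n)$ be a (PS)$_c$ sequence. Bounding $\cE_\gamma(u_n)-\frac1{2p}\cE_\gamma'(u_n)[u_n]$ gives $(\frac12-\frac1{2p})\|u_n\|^2\le c+1+o(1)\|u_n\|$, so $(u_n)$ is bounded. Up to a subsequence $u_n\rightharpoonup u$ in $H^1_{\gamma,\rad}$.

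Here I invoke Lemma \ref{lemma_compact}: the embedding $H^1_{\gamma,\rad}(\R^N)\hookrightarrow L^q(\R^N)$ is compact for $2<q<2^*_\gamma$. Hence $u_n\to u$ in $L^{pr}$. By HLS, $d^{-\mu}\ast|u_n|^p\to d^{-\mu}\ast|u|^p$ in $L^{2N_\gamma/\mu}$ and $|u_n|^{p-2}u_n\to|u|^{p-2}u$ in $L^{pr/(p-1)}$. This gives
\[
\int (d^{-\mu}\ast|u_n|^p)|u_n|^{p-2}u_n(u_n-u)\dd z\to0 .
\]
Combined with $\cE_\gamma'(u_n)[u_n-u]\to0$, it follows that $\|u_n\|\to\|u\|$, so $u_n\to u$ strongly.

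The Mountain Pass Theorem then yields a critical point $u$ of $\cE_\gamma|_{H^1_{\gamma,\rad}}$ at a level $c>0$, so $u\ne0$.

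\textbf{Step 4: symmetric criticality.} The group $G=O(m)\times O(\ell)$ acts isometrically on $H^1_\gamma$ via $g u(x,y)=u(g_1^{-1}x,g_2^{-1}y)$. The norm is invariant: $|x|$ is preserved and the gradients rotate. The nonlocal term is invariant because $d(g z)=d(z)$ and the action preserves Lebesgue measure. Palais' principle then gives $\cE_\gamma'(u)=0$ on all of $H^1_\gamma(\R^N)$, i.e.\ $u$ is a weak solution.

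\textbf{Main obstacle.} The delicate point is Step 3, namely the compactness of the radial embedding in the anisotropic setting (Lemma \ref{lemma_compact}) together with the Grushin HLS inequality. I take both as available, noting that HLS holds on spaces of homogeneous type with Lebesgue measure scaling like $d^{N_\gamma}$.
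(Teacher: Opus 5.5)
Your proposal is correct and follows the paper's proof essentially step by step. The common route is: Grushin HLS plus the Sobolev embedding for the $C^1$ property, then mountain pass geometry and the Palais--Smale condition on $H^1_{\gamma,\rad_{x,y}}(\R^N)$ via the compact radial embedding of Lemma \ref{lemma_compact}, and finally Palais' principle of symmetric criticality with $G=O(m)\times O(\ell)$. Two of your details are slightly cleaner than the paper's: your bound $(\tfrac12-\tfrac1{2p})\|u_n\|_\gamma^2\le c+1+o(1)\|u_n\|_\gamma$ has the correct inequality direction, and you get nontriviality directly from the mountain pass level being $\ge a>0=\cE_\gamma(0)$.
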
 
Once the existence of a weak solution is established, we investigate its regularity by using a Brezis-Kato and a bootstrap argument.
\begin{Th} \label{thm_reg}
    Let $\mu\in (0,4)$ and $p$ satisfies \eqref{eq_cond_p}. Let $u \in H^1_\gamma(\R^N)$ be a weak solution to \eqref{GC}.
    Then, $u\in L^q(\R^N)$ for all $q\in [2,+\infty]$. Moreover, $u\in C^{0,\alpha}_\loc(\R^N)$ for some $\alpha\in (0,1)$.
\end{Th}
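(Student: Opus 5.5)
We will follow the scheme used by Moroz and Van Schaftingen for the classical Choquard equation. The equation is rewritten as a linear Schrödinger-type equation with a nonlocal potential. The first step is to set
\begin{equation}
    V(z) \coloneqq \left(d(z)^{-\mu}\ast |u|^p\right)|u|^{p-2},
\end{equation}
so that $u$ weakly solves $-\Delta_\gamma u + u = V u$ in $\R^N$. From the Grushin Hardy-Littlewood-Sobolev inequality (with homogeneous dimension $N_\gamma$) and the Sobolev embedding $H^1_\gamma(\R^N)\hookrightarrow L^q(\R^N)$ for $q\in[2,2^*_\gamma]$, where $2^*_\gamma = 2N_\gamma/(N_\gamma-2)$, we will split $V = V_1 + V_2$. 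Here $V_1\in L^{N_\gamma/2}(\R^N)$ is small in norm, and $V_2\in L^\infty(\R^N)$ (or $V_2\in L^{r}$ with $r>N_\gamma/2$). This splitting is done by truncating $|u|^{p-2}$ and the Riesz-type potential $I_\mu\ast|u|^p$ at large levels.

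Next comes the Brezis-Kato iteration. We test the equation with $v = u\,\min(|u|,T)^{2(s-1)}$, with $T\to\infty$. The equation is only $X$-elliptic, but the test-function computation uses only $|\nabla_\gamma(\cdot)|^2$, the chain rule for $\nabla_\gamma$, and the Sobolev inequality in $H^1_\gamma$, so it carries over verbatim. The small part $V_1$ is absorbed by Hölder's inequality with exponents $N_\gamma/2$ and $N_\gamma/(N_\gamma-2)$. This yields $u\in L^{2s\cdot 2^*_\gamma/2}$ whenever $u\in L^{2s}$, and hence $u\in L^q(\R^N)$ for every $q\in[2,\infty)$.

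The main obstacle is passing to $q=\infty$ and controlling the nonlocal term. Once $u\in L^q$ for all finite $q$, we will estimate $d^{-\mu}\ast|u|^p$ in $L^\infty$. We split the integral into $\{d(z-w)<1\}$ and its complement. The inner part is finite because $d^{-\mu}\in L^1_{\loc}$ whenever $\mu<N_\gamma$. Here we must be careful that $d(z-w)$ is the gauge evaluated at the Euclidean difference, and its integrability follows from the scaling $\delta_t$. On the outer part we use Hölder's inequality with $|u|^p\in L^1\cap L^\infty$-type exponents. This is where the restriction $\mu\in(0,4)$ should enter, to make the exponents compatible with $p$ in \eqref{eq_cond_p}. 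We then get $V\in L^r_{\loc}$ with $r>N_\gamma/2$ uniformly on unit gauge balls, and a Moser iteration for $X$-elliptic operators (Kogoj-Lanconelli) gives local sup bounds that are uniform in the center. This implies $u\in L^\infty(\R^N)$.

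Finally, since $u\in L^\infty$ and $d^{-\mu}\ast|u|^p\in L^\infty$, the right-hand side $f = Vu$ lies in $L^\infty(\R^N)$. Hölder continuity $u\in C^{0,\alpha}_\loc(\R^N)$ then follows from the De Giorgi-Nash-Moser regularity theory for uniformly $X$-elliptic operators with bounded right-hand side \cite{KL00}, applied to $-\Delta_\gamma u = f - u$. Here we use the fact, recalled in the introduction, that $\Delta_\gamma$ is uniformly $X$-elliptic with respect to $X$. The Hölder continuity obtained is with respect to the control distance, which is locally comparable to Euclidean Hölder continuity with a possibly smaller exponent.
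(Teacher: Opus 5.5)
Your argument has a genuine gap in its first step. The range \eqref{eq_cond_p} is $2-\frac{\mu}{N_\gamma}<p<\frac{2N_\gamma-\mu}{N_\gamma-2}$, so it always contains exponents $p\in(1,2)$; the paper treats this case explicitly in Lemma~\ref{lem_reg_2}. For such $p$ your potential $V=(d^{-\mu}\ast|u|^p)|u|^{p-2}$ involves a \emph{negative} power of $|u|$. Everything you know about $u$ (membership in $H^1_\gamma$, Sobolev, HLS) gives only upper bounds on $|u|$. Truncating $|u|^{p-2}$ at a large level leaves the piece $(d^{-\mu}\ast|u|^p)|u|^{p-2}\chi_{\{|u|<\varepsilon\}}$. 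It lives near the nodal set of a possibly sign-changing weak solution and on the infinite-measure set where $u$ is small. Nothing makes this piece small in $L^{N_\gamma/2}$, and it need not even be locally in $L^{N_\gamma/2}$. For instance, near a hypersurface on which $u$ vanishes linearly, $|u|^{(p-2)N_\gamma/2}$ is not integrable once $(2-p)N_\gamma\ge 2$, which \eqref{eq_cond_p} allows when $\mu>2$. So $V\in L^{N_\gamma/2}+L^\infty$ is unjustified, and the local Brezis--Kato iteration cannot start. For $p\ge 2$ your splitting is fine, since then $V\in L^\sigma$ for some $\sigma>N_\gamma/2$.

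This obstruction is why Moroz and Van Schaftingen do not use a local potential. They prove a \emph{nonlocal} Brezis--Kato estimate that keeps the nonlinearity in the form $(|z|^{-\mu}\ast(Hu))K$ with $H=K=|u|^{p-2}u$. It is also why the paper, after testing with $u_M^{2k+1}$, bounds the convolution term directly by HLS,
\[
\int_{\R^N}(d^{-\mu}\ast|u|^p)|u|^{p-1}u_M^{2k+1}\dd z\le C\|u\|^{p}_{\frac{2pN_\gamma}{2N_\gamma-\mu}}\|u\|^{2k+p}_{(2k+p)\frac{2N_\gamma}{2N_\gamma-\mu}},
\]
so that only positive powers of $|u|$ occur. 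This is where $\mu<4$ is genuinely used. It gives $2^*_\gamma>\frac{4N_\gamma}{2N_\gamma-\mu}$, so passing from $L^{(2k+p)\frac{2N_\gamma}{2N_\gamma-\mu}}$ to $L^{(k+1)2^*_\gamma}$ improves the exponent by a factor bounded away from $1$. By contrast, $\mu<4$ plays no role in bounding $d^{-\mu}\ast|u|^p$ in $L^\infty$; that works for every $\mu\in(0,N_\gamma)$ once $u\in L^q$ for all finite $q\ge 2$. One caveat if you follow the paper here: for $p<2$, its bound by $\delta^{p-2}$ on $\{|u|\le\delta\}$ does not hold. Keep $|u|^{2k+p}$ as it is instead; the iteration still gains.

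The same problem reappears in your $L^\infty$ step: for $p<2$ you cannot claim $V\in L^r_{\loc}$ with $r>N_\gamma/2$. Once $K=d^{-\mu}\ast|u|^p\in L^\infty$, use instead the growth bound $\bigl|K|u|^{p-2}u\bigr|\le\|K\|_\infty(1+|u|)$ and treat the right-hand side as a nonlinearity, as the paper does. In addition, local Moser bounds that are \emph{uniform in the center} need an argument, because $\Delta_\gamma$ is not invariant under translations in $x$. You would need cutoffs built from the control distance, whose $\nabla_\gamma$ is bounded independently of the center, combined with the global Sobolev inequality. The paper avoids this by running a global De Giorgi iteration on $w_k=(v-1+2^{-k})^+$. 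Your final H\"older step agrees with the paper's, which invokes the nonhomogeneous Harnack inequality for $X$-elliptic operators.
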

Lastly, in the spirit of \cite[Proposition 3.1]{MVS2013}, we propose a Poho\v zaev identity for the Grushin-Choquard equation \eqref{GC} (see Proposition \ref{prop_Pohozaev}). This identity yields a nonexistence result, showing that the choice of $p$ in \eqref{eq_cond_p} is sharp, giving the analogous of \cite[Theorem 2]{MVS2013}. Some differences compared to the nondegenerate case, of course, occur. Indeed, in \cite[Proposition 3.1]{MVS2013}, the authors obtained the identity by testing the equation with a suitable function (see Section \ref{Sect:nex} for details). However, as observed in \cite{KL12} (see also \cite{L19}), in the Grushin setting we have to consider $C^2$-test functions. However, it is important to remark, that, at least to our knowledge, so far no results for regularity \textit{more than H\"older} are available, and, for the sake of accuracy, it seems nontrivial.

Our nonexistence result states as follows.
\begin{Th} \label{thm_nonexistence}
    Let $u\in C^2(\R^N)\cap H^1_\gamma(\R^N)$ be a weak solution to \eqref{GC}. If $\frac{1}{p}\le \frac{N_\gamma-2}{2N_\gamma-\mu}$ or $\frac{1}{p}\ge \frac{N_\gamma}{2N_\gamma-\mu}$ then $u\equiv 0$.
\end{Th}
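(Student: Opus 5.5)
The plan is to combine the weak formulation tested with $u$ itself (the Nehari identity) with a Poho\v zaev identity built from the anisotropic dilation generator. For a $C^2$ solution, the natural multiplier is
$$Zu = x\cdot\nabla_x u + (\gamma+1)\, y\cdot\nabla_y u,$$
the infinitesimal generator of $\delta_t$. The reason is that $\Delta_\gamma$ is homogeneous of degree $2$ and $d$ is homogeneous of degree $1$ with respect to $\delta_t$, while the Jacobian of $\delta_t$ is $t^{N_\gamma}$.

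\textbf{Scaling heuristic.} Set $u_t(z)=u(\delta_t z)$. Then
$$\int|\nabla_\gamma u_t|^2=t^{2-N_\gamma}\int|\nabla_\gamma u|^2,\qquad \int u_t^2=t^{-N_\gamma}\int u^2,$$
$$\int (d^{-\mu}\ast|u_t|^p)|u_t|^p=t^{\mu-2N_\gamma}\int(d^{-\mu}\ast|u|^p)|u|^p .$$
Differentiating $\cE_\gamma(u_t)$ at $t=1$ therefore gives the expected identity
$$\frac{N_\gamma-2}{2}A+\frac{N_\gamma}{2}B=\frac{2N_\gamma-\mu}{2p}C,$$
where $A=\|\nabla_\gamma u\|_2^2$, $B=\|u\|_2^2$ and $C=\int(d^{-\mu}\ast|u|^p)|u|^p$. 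Testing the weak formulation with $v=u$ gives $A+B=C$.

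\textbf{Rigorous derivation (Proposition \ref{prop_Pohozaev}).} Multiply the equation by $\varphi_R(z)\,Zu$, where $\varphi_R(z)=\varphi(\delta_{1/R}z)$ is a cutoff adapted to the dilations. Since $u\in C^2$, one can integrate by parts pointwise. The key computations are these.
\begin{itemize}
\item $Zu\cdot u=\frac12 Z(u^2)$ and $\divv(Z)=N_\gamma$, so the mass term contributes $-\frac{N_\gamma}{2}\int u^2\varphi_R$ plus an error.
\item For the Grushin term, the commutator identities $[\partial_{x_i},Z]=\partial_{x_i}$ and $[|x|^\gamma\partial_{y_j},Z]=(\gamma+1-\gamma)|x|^\gamma\partial_{y_j}$ show that $X_j$ commutes with $Z$ up to adding $X_j$. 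This gives $\int \nabla_\gamma u\cdot\nabla_\gamma(Zu)\varphi_R=\frac{2-N_\gamma}{2}\int|\nabla_\gamma u|^2\varphi_R+$ error.
\item For the nonlocal term, write $|u|^{p-2}u\,Zu=\frac1p Z(|u|^p)$ and symmetrize the double integral in $(z,w)$. Using $(Z_z+Z_w)\, d(z-w)^{-\mu}$ — more precisely, the homogeneity of the kernel under the joint dilation — yields $-\frac{2N_\gamma-\mu}{2p}C$.
\end{itemize}
The error terms involve $\nabla\varphi_R$ times quantities like $|Z\varphi_R|$. These are bounded because $Z\varphi_R=(Z\varphi)(\delta_{1/R}\cdot)$ is uniformly bounded and supported on an annulus, so dominated convergence applies. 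This requires $|\nabla_\gamma u|^2+u^2\in L^1$ and $(d^{-\mu}\ast|u|^p)|u|^p\in L^1$, the latter by Hardy–Littlewood–Sobolev in the Grushin setting.

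\textbf{Main obstacle.} The delicate point is the nonlocal term. The kernel $d(z-w)^{-\mu}$ is \emph{not} jointly homogeneous under $(z,w)\mapsto(\delta_t z,\delta_t w)$, because Grushin "translation" is not a group action: $d(\delta_tz-\delta_tw)=t\,d(z-w)$ does hold, since $\delta_t$ is linear and $d\circ\delta_t=t\,d$. So the pairing $\int\!\!\int |u(\delta_t z)|^p|u(\delta_t w)|^p d(z-w)^{-\mu}$ does scale as computed after the change of variables. I would therefore derive this term directly from the change of variables $u_t$, differentiating under the integral via $\frac{d}{dt}|u_t|^p\big|_{t=1}=p|u|^{p-2}u\,Zu$, rather than via pointwise symmetrization. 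The $C^2$ regularity with cutoffs is used only to justify the local terms.

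\textbf{Conclusion.} Eliminating $C$ between the two identities gives
$$\Big(\tfrac{N_\gamma-2}{2}-\tfrac{2N_\gamma-\mu}{2p}\Big)A+\Big(\tfrac{N_\gamma}{2}-\tfrac{2N_\gamma-\mu}{2p}\Big)B=0.$$
If $\frac1p\le\frac{N_\gamma-2}{2N_\gamma-\mu}$, both coefficients are $\ge0$ and the second is strictly positive, so $B=0$. If $\frac1p\ge\frac{N_\gamma}{2N_\gamma-\mu}$, both coefficients are $\le0$ and the first is strictly negative, so $A=0$, hence $u$ is constant and in $L^2$. In either case $u\equiv0$.
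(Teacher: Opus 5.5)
Your skeleton matches the paper's proof: the Nehari identity $A+B=C$, the Poho\v zaev identity obtained by testing with a cutoff times $Zu=\hat z\cdot\nabla u$, and elimination of $C$. The commutator identity $[X_j,Z]=X_j$ is a neat replacement for the paper's four terms $J_1,\dots,J_4$. Your endpoint analysis is also more careful than the paper's: at $\frac1p=\frac{N_\gamma}{2N_\gamma-\mu}$ the combined identity only yields $A=0$, and one then needs that $u$ is a constant in $L^2$.

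The gap is in the route you say you would actually take for the nonlocal term. Since the test function is $\varphi_R\,Zu$, the equation produces $\int(d(z)^{-\mu}\ast|u|^p)|u|^{p-2}u\,\varphi_R\,Zu\dd z$. So the cutoff sits in the nonlocal term too, and removing it there is exactly the difficulty; it is not true that the cutoffs are needed only for the local terms. Differentiating $t\mapsto\cD(u_t)$ at $t=1$ identifies $(\mu-2N_\gamma)\cD(u)$ with $2p$ times the \emph{uncut} integral $\int(d(z)^{-\mu}\ast|u|^p)|u|^{p-2}u\,Zu\dd z$. Nothing in the hypotheses makes that integral absolutely convergent, or justifies differentiating under the integral sign. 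The reason is that $Zu=x\cdot\nabla_xu+(1+\gamma)y\cdot\nabla_yu$ carries unbounded weights, while only $\nabla_\gamma u\in L^2$ is known.

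What works is what you wrote in your first bullet, which is the paper's argument. You symmetrize in $(z,w)$ and integrate by parts in $z$ and in $w$, so that only $|u(z)|^p|u(w)|^p$ remains against explicit kernels. You then let the cutoff go using $\nabla_z d(z-w)\cdot(\hat z-\hat w)=d(z-w)$. That identity \emph{is} the joint homogeneity of the kernel, which holds because $\delta_t$ is linear. This contradicts the first sentence of your ``main obstacle'' paragraph.

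There are two smaller points. First, finiteness of $\cD(u)$ does not follow from Hardy--Littlewood--Sobolev here. In the nonexistence range $\frac{2pN_\gamma}{2N_\gamma-\mu}\ge 2^*_\gamma$ or $\le 2$, so apart from the endpoints $H^1_\gamma$ does not embed into the needed Lebesgue space. The paper assumes $u\in L^{2pN_\gamma/(2N_\gamma-\mu)}$ in Proposition \ref{prop_Pohozaev}; alternatively, you can read finiteness off the weak formulation with $v=u$.

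Second, your local error analysis handles $\int|\nabla_\gamma u|^2Z\varphi_R$ but not the cross term $\int Zu\,\nabla_\gamma u\cdot\nabla_\gamma\varphi_R$. That term contains $(y\cdot\nabla_yu)(\nabla_xu\cdot\nabla_x\varphi_R)$, and $\nabla_yu$ is controlled only with the weight $|x|^\gamma$. So you need a cutoff with $|y|\,|\nabla_x\varphi_R|\lesssim|x|^\gamma$. This holds, for instance, for $\varphi=\eta\circ d$, but not for an arbitrary $\varphi$.
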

A similar result in the Grushin setting has been proved by \cite{L19} for a Brezis-Nirenberg-type equation with Hardy potential in bounded domains and, recently, by \cite{YuYangTang} for elliptic equations with prescribed mass.

\medskip

\paragraph{\textbf{Overview}} The paper has the following outline. In Section \ref{Sect:preliminaries} we will set the functional space in which we consider our problem, stating some important embedding results. Then, thanks to the Hardy-Littlewood-Sobolev inequality adapted to the Grushin setting (cf. Lemma \ref{lemma_HLS}), we can prove that the associated functional is of class $C^1$ (see Proposition \ref{funct:reg}).

In Section \ref{Sect:ex} we prove the existence of a mountain pass solution for our problem: we will begin by showing that the functional has the right geometry, and then we provide an analysis on the Palais-Smale sequences, proving that they are bounded and, eventually, that the $(PS)$-condition at the mountain-pass level is satisfied. 

In Section \ref{Sect:reg} we prove the desired regularity for weak solutions to \eqref{GC}.

Finally, in Section \ref{Sect:nex} we first prove of the Poho\v zaev identity and then we provide the nonexistence result.

\section{Preliminaries}
\label{Sect:preliminaries}
This section contains all the preliminary results needed in the rest of the paper and is divided into two parts: in the first one, we present the functional setting in which the problem is considered, defining the space, and recalling some important embedding properties. In the second part, we report the generalization to Grushin spaces of the Hardy-Littlewood-Sobolev, together with the proof of the regularity of our energy functional \eqref{energy:functional}.
\subsection{Functional setting}
Given $q\in[1,+\infty]$, $L^q(\R^N)$ stands for the standard Lebesgue space, whose norm will be indicated by $\|\cdot\|_q$. 

We define the inner product
\begin{equation*}
    (u,v)_\gamma = \int_{\R^N} \left( \nabla_\gamma u \cdot \nabla_\gamma v + uv\right)\dd z,
\end{equation*}
which implies the norm
\begin{equation}
    \label{Grushin:norm}
    \|u\|_\gamma = \left(\int_{\R^N} (|\nabla_\gamma u|^2 + |u|^2 ) \dd z\right)^\frac12,
\end{equation}
and so we consider the Grushin-Sobolev space
\begin{equation}
    \label{H1:grushin}
    H^1_\gamma(\R^N) = \left\{u \in L^2(\R^N) : \|u\|_\gamma < +\infty \right\},
\end{equation}
which is clearly a Hilbert space.
In \cite[Lemma 5.1]{Loiudice2006}, the author showed that 
\begin{equation}
    \|u\|_{2^*_\gamma}^2\leq C\int_{\R^N}|\nabla_\gamma u|^2\dd z\le C \int_{\R^N} \left(|\nabla_\gamma u|^2 + |u|^2 \right)\dd z = C\|u\|_\gamma^2
\end{equation}
for all $u \in C^\infty_c(\R^N)$, where $2^*_\gamma=2N_\gamma/(N_\gamma-2)$ is the critical Sobolev-Grushin exponent. Then, arguing similarly as  in \cite{Loiudice2006}, this implies that the function $u \mapsto \|u\|_\gamma$ is a norm on $C^\infty_c(\R^N)$. Hence, if we set $H^1_{0,\gamma}(\R^N)$ as the completion of $C^\infty_c(\R^N)$ with respect to the norm \eqref{Grushin:norm}, then $H^1_{0,\gamma}(\R^N)$ is a Hilbert space, with the same norm of \eqref{H1:grushin}: therefore, they must coincide. Furthermore, since, as observed in \cite[p. 1249]{AH23}, it holds
\begin{equation}
    \|u\|_{2^*_\gamma}\leq C\|u\|_\gamma,
\end{equation}
for every $u \in H^1_\gamma(\R^N)$, then by interpolation the embedding
\begin{equation}
    \label{H1gamma:emb}
    H^1_\gamma(\R^N) \hookrightarrow L^q(\R^N)
\end{equation}
is continuous for every $q \in [2,2^*_\gamma]$.

We also define the space of radial functions
\begin{equation}
\label{GS:space:radial}
    H^1_{\gamma,\rad_{x,y} }(\R^N)\coloneqq \{ u\in H^1_\gamma(\R^N)\, : u(x,y) = u(|x|, |y|), \,\hbox{for all $(x,y)\in\R^N$}\}
\end{equation}
and recall the compact embedding
\begin{Lem}[{\cite[Lemma 2.3]{AH23}}]\label{lemma_compact}
Let $q\in (2,2^*_\gamma)$. Then the embedding $H^1_{\gamma,\rad_{x,y} }(\R^N)\hookrightarrow L^q(\R^N)$ is compact.
\end{Lem}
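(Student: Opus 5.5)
Since $H^1_{\gamma,\rad_{x,y}}(\R^N)\hookrightarrow H^1_\gamma(\R^N)\hookrightarrow L^q(\R^N)$ is already continuous by \eqref{H1gamma:emb}, the plan is to show that a bounded sequence $(u_n)$ in $H^1_{\gamma,\rad_{x,y}}(\R^N)$ has a subsequence converging strongly in $L^q(\R^N)$. Since $q$ is strictly between $2$ and $2^*_\gamma$, interpolation reduces this to a tail estimate. Write
\[
\|u\|_q\le \|u\|_2^{1-\theta}\|u\|_{2^*_\gamma}^{\theta}\le C\|u\|_\gamma .
\]
It then suffices to prove two things. First, $u_n\to u$ in $L^2_\loc$ (or in $L^q_\loc$). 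Second, the tails are uniformly small:
\[
\sup_n \int_{\{|x|>R\}\cup\{|y|>R\}} |u_n|^q \dd z\to 0 \quad\text{as } R\to\infty.
\]

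For the local compactness I would use that on any bounded set the Grushin gradient controls the Euclidean gradient where $|x|\ge\delta$, which gives Rellich there. Near the degenerate set $\{|x|<\delta\}$, I would use the smallness of $\int_{\{|x|<\delta\}\cap B_R}|u_n|^q$. This follows from Hölder's inequality together with the uniform $L^{2^*_\gamma}$ bound, since $|\{|x|<\delta\}\cap B_R|\to 0$. Alternatively, one can invoke the known compact local embedding for $X$-elliptic / Hörmander vector fields. Either way, a diagonal argument yields a subsequence with $u_n\to u$ in $L^q(B_R)$ for every $R$.

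The main step is the tail estimate, which I would prove via Strauss-type radial decay lemmas applied separately in $x$ and $y$. For $u=u(|x|,|y|)$ with $r=|x|$ and $s=|y|$, the $x$-direction is nondegenerate. One applies the one-dimensional Strauss argument in $r$, using $\partial_r(r^{m-1}u^2)$-type identities and integrating over $y$. This gives a bound on $\int_{\R^\ell}|u(r,y)|^2\dd y$ by $C r^{-(m-1)}\|u\|_\gamma^2$ when $m\ge2$. In the $y$-direction one uses $|x|^\gamma\nabla_y u$ on the region $|x|\ge1$, obtaining a similar decay $s^{-(\ell-1)}$ when $\ell\ge2$. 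Combining the decay on the tail region with interpolation between $L^2$ and $L^{2^*_\gamma}$ then gives
\[
\int_{\text{tail}}|u|^q\le \Big(\sup_{\text{tail}}\text{(decay)}\Big)^{\frac{q-2}{2}}\cdot C\|u\|_\gamma^2 .
\]

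The hardest part is handling the degeneracy near $x=0$ for large $|y|$, and the low-dimensional cases $m=1$ or $\ell=1$, where radial symmetry in that variable gives no decay. For the first issue, I would treat $\{|x|<1,\ |y|>R\}$ using the Strauss decay in $|y|$ obtained from slices $|x|\in(1,2)$ transported via $\partial_r$ control. For the second, I would expect the cited result \cite[Lemma 2.3]{AH23} to need $m,\ell\ge2$, or to rely on a Lions-type vanishing lemma combined with the symmetry. As a fallback, I would rather prove a Lions lemma: if $\sup_z\int_{B(z,1)}|u_n|^2\to0$ then $u_n\to0$ in $L^q$. The $(x,y)$-radial symmetry then prevents mass from escaping along orbits, because many disjoint unit balls would then carry equal mass, contradicting the $L^2$ bound.
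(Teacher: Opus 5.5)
The paper gives no proof of this lemma; it is quoted from \cite[Lemma 2.3]{AH23}, so your argument has to stand on its own. Your local step is fine. On $\{|x|>\delta\}\cap B_R$ the Grushin gradient controls the Euclidean one, so Rellich applies, and H\"older's inequality with the uniform $L^{2^*_\gamma}$ bound handles the thin set $\{|x|<\delta\}\cap B_R$.

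The tail step, however, does not close. The Strauss argument in $|x|$ only gives \emph{slice} bounds such as $\int_{\R^\ell}|u(x,y)|^2\dd y\le C|x|^{1-m}\|u\|_\gamma^2$. Your displayed inequality is really $\int_T|u|^q\le\|u\|_{L^\infty(T)}^{q-2}\int_T|u|^2$, and it requires pointwise decay. An $(x,y)$-radial function, which is effectively a function of the two variables $|x|,|y|$, does not have such decay.

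Interpolating between $L^2(T)$ and $L^{2^*_\gamma}(T)$ cannot rescue it. The slice bound integrated against $|x|^{m-1}\dd|x|$ diverges. In fact $L^2$-tails are not uniformly small on bounded sets. The function $u_t=t^{-N_\gamma/2}\,u\circ\delta_{1/t}$ preserves the symmetry and $\|u_t\|_2$, satisfies $\|\nabla_\gamma u_t\|_2=t^{-1}\|\nabla_\gamma u\|_2$, and $u_t\rightharpoonup0$ as $t\to\infty$.

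The gain from $q>2$ has to come from a Sobolev inequality on unit cells, i.e.\ from your fallback, which is the right proof. A Lions vanishing lemma does hold in $H^1_\gamma(\R^N)$, because the cell Sobolev constants are uniform: $|\nabla u|\le|\nabla_\gamma u|$ where $|x|\ge1$, and cells near $\{x=0\}$ are all equivalent under $y$-translations. Apply it to $v_n=u_n-u$:
\begin{itemize}
\item on $\{|z|\le\rho\}$, use local compactness;
\item for $|z|>\rho$ with $|x|=r$ large, the $O(m)$-orbit of $z$ contains $\gtrsim r^{m-1}$ disjoint unit balls carrying the same mass, so $\int_{B(z,1)}|v_n|^2\le Cr^{1-m}\sup_n\|v_n\|_2^2$;
\item the same argument works with $|y|$ and $O(\ell)$.
\end{itemize}
This also removes your concern about $x$ near $0$ with $|y|$ large, since the $O(\ell)$-orbit does the work for every $x$. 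It proves the lemma when $m,\ell\ge2$.

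Your doubt about low dimensions is justified, and for $\ell=1$ it is the statement, not the method, that fails; the paper allows $m,\ell\ge1$ and $\gamma\ge0$. Take $\phi(x,y)=\psi(|x|)\chi(y)$ with $\chi\in C^\infty_c((-1,1))$ even, and set $u_n(x,y)=\psi(|x|)\left(\chi(y-n)+\chi(y+n)\right)$. These functions lie in $H^1_{\gamma,\rad_{x,y}}(\R^N)$. They satisfy $\|u_n\|_\gamma^2=2\|\phi\|_\gamma^2$, because $\nabla_\gamma$ commutes with translations and reflections in $y$. Also $u_n\rightharpoonup0$, yet $\|u_n\|_q^q=2\|\phi\|_q^q$. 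The same construction in $x$ defeats $m=1$ when $\gamma=0$.

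For $m=1$ and $\gamma>0$, on the other hand, no symmetry in $x$ is needed at all. Tile $\{R\le|x|\le2R\}$ by boxes $(x_0+[0,1]^m)\times(y_0+R^\gamma[0,1]^\ell)$, rescale $y$ by $R^\gamma$, and use the Euclidean Sobolev inequality on the unit cube. This yields $\int_{\{|x|\ge R\}}|u|^q\dd z\le CR^{-\gamma\ell(q-2)/2}\|u\|_\gamma^q$ for every $u\in H^1_\gamma(\R^N)$. So the lemma holds precisely when $\ell\ge2$ and either $m\ge2$ or $\gamma>0$. In that range it is the Lions-lemma route, not the Strauss route, that proves it.
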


\subsection{Some tools}
The following result is a generalization to the Grushin setting of the Hardy-Littlewood-Sobolev inequality, due to \cite[Proposition 2.2]{KanugoMishraMolicaBisci}.
\begin{Lem}
\label{lemma_HLS}
    Let $r,s>1$ and $0 < \mu < N_\gamma$, with $\frac{1}{r} + \frac{\mu}{N_\gamma}+\frac1s = 2$. Let $f \in L^r(\R^N)$ and $h \in L^s(\R^N)$. There exists a sharp constant $C\coloneqq C(N_\gamma,\mu,r,s)$, independent of $f,h$ such that
    \begin{equation}
        \label{Grushin:HLS}
        \int_{\R^N \times \R^N} \frac{f(z)h(w)}{d(z-w)^\mu} \dd z \dd w \leq C\|f\|_r\|h\|_s.
    \end{equation}
\end{Lem}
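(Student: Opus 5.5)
The plan is to reduce the statement to a weak-type Young inequality on $\R^N$, with Lebesgue measure and the usual Euclidean convolution; the only Grushin-specific input will be the scaling of the distribution function of the kernel $K(z) \coloneqq d(z)^{-\mu}$.

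\textbf{Step 1: $K$ lies in weak $L^{q}$ with $q = N_\gamma/\mu$.} The gauge $d$ is homogeneous of degree one with respect to the dilations, $d(\delta_t z) = t\, d(z)$. The Jacobian of $\delta_t$ is $t^{m+(\gamma+1)\ell} = t^{N_\gamma}$. Hence
\begin{equation}
|\{z : d(z) < t\}| = |\delta_t(\{d<1\})| = c_0\, t^{N_\gamma},
\end{equation}
where $c_0 = |\{d<1\}|$ is finite and positive, because $\{d<1\}$ is a bounded open set. It follows that $|\{K > \lambda\}| = c_0 \lambda^{-N_\gamma/\mu}$, so $K \in L^{q,\infty}(\R^N)$. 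The assumption $0<\mu<N_\gamma$ gives $q \in (1,\infty)$. We also record that $K(-z) = K(z)$ and that $K$ is continuous away from the origin.

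\textbf{Step 2: boundedness of $Tf = f \ast K$ from $L^r$ to $L^{s'}$.} Set $1/s' = 1 - 1/s$. The constraint $\frac1r + \frac{\mu}{N_\gamma} + \frac1s = 2$ becomes $1 + \frac1{s'} = \frac1r + \frac1q$. The hypotheses $r,s>1$ then mean $1 < r < q'$ and $s'<\infty$. I will use the classical truncation argument. Split $K = K\chi_{\{d \le R\}} + K\chi_{\{d>R\}} = K_1 + K_2$.
\begin{itemize}
\item By Step 1 and the layer-cake formula, $K_1 \in L^1$ with $\|K_1\|_1 \lesssim R^{N_\gamma-\mu}$.
\item Likewise $K_2 \in L^{r'}$ with $\|K_2\|_{r'} \lesssim R^{N_\gamma/r' - \mu}$. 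This exponent is negative precisely because $r < q'$.
\end{itemize}
Normalize $\|f\|_r = 1$. Young's inequality gives $\|f\ast K_2\|_\infty \lesssim R^{N_\gamma/r'-\mu}$. Given $\lambda>0$, choose $R$ so that this bound equals $\lambda/2$. Chebyshev's inequality then gives
\begin{equation}
|\{|Tf|>\lambda\}| \le |\{|f\ast K_1|>\lambda/2\}| \lesssim \lambda^{-r}\|K_1\|_1^r\|f\|_r^r .
\end{equation}
Substituting the chosen $R$ and using the exponent relation turns the right-hand side into $\lambda^{-s'}$. This is the weak-type $(r,s')$ bound. Since this holds on an open interval of exponents $r \in (1,q')$, with $s'$ determined by $r$, and $s' > r$, the Marcinkiewicz interpolation theorem upgrades it to a strong bound $\|f\ast K\|_{s'} \le C\|f\|_r$.

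\textbf{Step 3: conclusion.} By Fubini and Hölder's inequality,
\begin{equation}
\int\!\!\int \frac{f(z)h(w)}{d(z-w)^\mu}\dd z\dd w = \int_{\R^N} h\,(f\ast K)\dd w \le \|h\|_s\|f\ast K\|_{s'} \le C\|f\|_r\|h\|_s .
\end{equation}
Here we may take $f,h\ge 0$ first, and then pass to general $f,h$ using absolute values. The sharp constant is then simply defined as the supremum of the left-hand side over $\|f\|_r=\|h\|_s=1$, which is finite by the above.

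\textbf{Main obstacle.} The main obstacle is the exponent bookkeeping in Step 2: one must check that the choice of $R$ produces exactly the power $\lambda^{-s'}$, and that the endpoint restrictions $r>1$ and $s'<\infty$ are what make $K_2\in L^{r'}$ and allow interpolation. An alternative route would be the Lieb--Loss layer-cake proof of HLS. That proof uses only the measure of the superlevel sets of the kernel, which is computed in Step 1, so it would also yield the result.
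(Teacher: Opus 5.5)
Your argument is correct, and there is nothing in the paper to compare it against step by step: the paper gives no proof of this lemma and simply imports it from \cite[Proposition 2.2]{KanugoMishraMolicaBisci}.

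\textbf{What your route makes explicit.} Your self-contained proof shows what the citation hides. The kernel is evaluated at the Euclidean difference $z-w$ and integrated against Lebesgue measure, so the left-hand side is an ordinary convolution on $(\R^N,+)$. The only Grushin-specific input is then the distribution function $|\{d^{-\mu}>\lambda\}|=c_0\lambda^{-N_\gamma/\mu}$. This follows from $d(\delta_t z)=t\,d(z)$ and $\det\delta_t=t^{N_\gamma}$. It also explains transparently why $N_\gamma$, rather than $N$, appears in the exponent balance. Equivalently, $(\R^N,+)$ with the dilations $\delta_t$ is an abelian homogeneous group with homogeneous norm $d$, so a Folland--Stein fractional-integration theorem would also apply directly.

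\textbf{The remaining steps check out.} I checked the bookkeeping you flagged as the main obstacle:
\begin{itemize}
\item The bounds $\|K_1\|_1\lesssim R^{N_\gamma-\mu}$ and $\|K_2\|_{r'}\lesssim R^{N_\gamma/r'-\mu}$ hold.
\item With $R$ chosen from $\lambda$, the total exponent is exactly $-s'$, because $1/s'=1/r+\mu/N_\gamma-1$.
\item The hypotheses $r,s>1$ give $r\in(1,q')$ and $r<s'<\infty$, so off-diagonal Marcinkiewicz interpolation applies.
\end{itemize}
The final Fubini--H\"older step is fine, using the evenness of $d$.

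\textbf{What neither approach gives.} You obtain no information on the value of the optimal constant. You simply define the ``sharp constant'' as the supremum over unit vectors, and that is all the paper ever uses. Your constant also depends on $m,\ell,\gamma$ through $c_0=|\{d<1\}|$, not on $N_\gamma$ alone. This is unavoidable, since the best constant depends on the gauge itself. The notation $C(N_\gamma,\mu,r,s)$ in the statement should therefore be read loosely.
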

Thanks to the previous Lemma we can now prove the regularity of the energy functional $\cE_\gamma$, defined in \eqref{energy:functional}. For the sake of simplicity, we denote the convolution as
\begin{equation}
    \cD(u) = \int_{\R^N} (d(z)^{-\mu} \ast |u|^p)|u|^p \dd z
\end{equation}
and we will use it indiscriminately in both ways, depending on the situation. We have
\begin{Prop}
\label{funct:reg}
    Let $p >1$. Then the energy functional $\cE_\gamma$ is of class $C^1$ on $H^1_\gamma(\R^N)\cap L^{\frac{2pN_\gamma}{2N_\gamma-\mu}}(\R^N)$.
\end{Prop}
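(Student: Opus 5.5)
The energy splits as $\cE_\gamma(u)=\frac12\|u\|_\gamma^2-\frac1{2p}\cD(u)$. The quadratic part is smooth on $H^1_\gamma(\R^N)$, with derivative $(u,v)_\gamma$. It therefore suffices to show that $\cD$ is $C^1$ on $X\coloneqq H^1_\gamma(\R^N)\cap L^{r}(\R^N)$, with $r\coloneqq \frac{2pN_\gamma}{2N_\gamma-\mu}$ and norm $\|u\|_X=\|u\|_\gamma+\|u\|_r$. The key tool is Lemma \ref{lemma_HLS} with the symmetric choice $s=r'$, where $r'\coloneqq \frac{2N_\gamma}{2N_\gamma-\mu}$ (it is $r'$ that satisfies $\frac2{r'}+\frac{\mu}{N_\gamma}=2$). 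Since $|u|^p\in L^{r'}$ exactly when $u\in L^r$, we get
\begin{equation}
\cD(u)\le C\||u|^p\|_{r'}^2=C\|u\|_{r}^{2p},
\end{equation}
so $\cD$ is finite on $X$.

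To compute the derivative, I would consider the bilinear form $B(f,h)=\int\!\!\int f(z)h(w)d(z-w)^{-\mu}\dd z\dd w$. It is symmetric because $d(-z)=d(z)$, and it is bounded on $L^{r'}\times L^{r'}$. Hence $\cD(u)=B(|u|^p,|u|^p)$, and $\cD$ is the composition of the map $u\mapsto|u|^p$ from $L^r$ to $L^{r'}$ with a bounded quadratic form. The Nemytskii map $N(u)=|u|^p$ is $C^1$ from $L^r$ to $L^{r'}$ because $p>1$ and $r=pr'$. Its derivative is $N'(u)v=p|u|^{p-2}uv$. This follows from the pointwise mean value theorem, Hölder's inequality with exponents $\frac{r}{p-1}$ and $r$, and dominated convergence (Vitali). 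By the chain rule,
\begin{equation}
\cD'(u)[v]=2B(|u|^p,p|u|^{p-2}uv)=2p\int_{\R^N}(d(z)^{-\mu}\ast|u|^p)|u|^{p-2}uv\dd z,
\end{equation}
which yields $\cE'_\gamma$ in exactly the form \eqref{GC:weak:formulation}.

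For the continuity of $\cD'$, I would estimate $|(\cD'(u_n)-\cD'(u))[v]|$ by splitting it into two terms. The first is $B(|u_n|^p-|u|^p,|u_n|^{p-2}u_nv)$. The second is $B(|u|^p,(|u_n|^{p-2}u_n-|u|^{p-2}u)v)$. By Lemma \ref{lemma_HLS} and Hölder's inequality, both are bounded by $C\|v\|_r$ times quantities that vanish as $u_n\to u$ in $L^r$. These quantities are $\||u_n|^p-|u|^p\|_{r'}$ and $\||u_n|^{p-2}u_n-|u|^{p-2}u\|_{r/(p-1)}$, and both go to zero by continuity of the Nemytskii operators. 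Convergence in $X$ implies convergence in $L^r$, so this gives continuity of $\cD'$ in operator norm on $X$.

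The main obstacle is the Nemytskii step when $1<p<2$: the map $t\mapsto|t|^{p-2}t$ is only Hölder continuous there, so the difference quotient cannot be expanded to second order. I would handle this with the standard argument: pointwise convergence along subsequences, a uniform integrability bound $\big||u_n|^{p-2}u_n\big|^{r/(p-1)}=|u_n|^r$, and Vitali's theorem, followed by the usual subsequence argument to recover the whole sequence. Finally, I would note that when $r\in[2,2^*_\gamma]$ we have $X=H^1_\gamma(\R^N)$ by the embedding \eqref{H1gamma:emb}.
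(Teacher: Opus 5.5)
Your proof is correct and follows the same route as the paper. Both use Lemma \ref{lemma_HLS} with the symmetric exponent $2N_\gamma/(2N_\gamma-\mu)$ to get $\cD(u)\le C\|u\|_{\frac{2pN_\gamma}{2N_\gamma-\mu}}^{2p}$, and then split the difference into two terms to obtain continuity of $\cD$ and $\cD'$. Your version is more complete than the paper's in two respects: the paper dismisses the continuity of $\cE_\gamma'$ as ``very similar computations'' and writes the convolution term of $\cE_\gamma'$ with the wrong sign, whereas you get the sign right and supply the Nemytskii continuity of $u\mapsto|u|^{p-2}u$ from $L^r$ to $L^{r/(p-1)}$, which is exactly what is needed when $1<p<2$.
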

\begin{proof}
    By Lemma \ref{lemma_HLS} there exists a positive constant $C$ such that
    \begin{equation}
    \label{HLS:control:on:D}
        \int_{\R^N} (d(z)^{-\mu} \ast |u|^p)|u|^p \dd z 
        = 
        \int_{\R^N \times \R^N} \frac{|u(z)|^p|u(w)|^p}{d(z-w)^\mu} \dd z \dd w \leq 
        C \||u|^p\|_{{\frac{2N_\gamma}{2N_\gamma-\mu}}}^2 = C\|u\|_{{\frac{2pN_\gamma}{2N_\gamma-\mu}}}^{2p}.
    \end{equation}
Hence,
\begin{align}
    |\cE_\gamma(u)| 
    \leq 
    \frac12\|u\|_\gamma^2 + \int_{\R^N} (d(z)^{-\mu} \ast |u|^p)|u|^p \dd z
    \leq
    \frac12\|u\|_\gamma^2 + C\|u\|_{{\frac{2pN_\gamma}{2N_\gamma-\mu}}}^{2p},
\end{align}
so $\cE_\gamma$ is well defined. Now, let $(u_n) \subset H^1_\gamma(\R^N) \cap L^{\frac{2pN_\gamma}{2N_\gamma-\mu}}(\R^N)$ be a sequence such that $u_n \to u$ in $H^1_\gamma(\R^N) \cap L^{\frac{2pN_\gamma}{2N_\gamma-\mu}}(\R^N)$, and we compute
\begin{align}
    |\cE_\gamma(u_n) - \cE_\gamma (u)|
    &\leq
    \frac12\|u_n\|_\gamma^2-\frac12\|u\|_\gamma^2 +\frac{1}{2p}(\cD(u_n)-\cD(u))\\
    & = 
    \frac12\|u_n\|_\gamma^2-\frac12\|u\|_\gamma^2 \\
    &\quad +\frac{1}{2p}\int_{\R^N \times \R^N} \frac{|u_n(z)|^p(|u_n(w)|^p - |u(w)|^p) + |u(w)|^p(|u_n(z)|^p - |u(z)|^p}{d(z-w)^\mu} \dd z \dd w 
\end{align}
that goes to $0$ as $n\to\infty$ thanks to the weakly lower semicontinuity of the norm and \eqref{HLS:control:on:D}.
Hence, the energy functional $\cE_\gamma$ is continuous.\\
Computing the G\^{a}teuax derivative of $\cE_\gamma$ on $u \in H^1_\gamma(\R^N) \cap L^{\frac{2pN_\gamma}{2N_\gamma-\mu}}(\R^N)$ along $v \in H^1_\gamma(\R^N) \cap L^{\frac{2pN_\gamma}{2N_\gamma-\mu}}(\R^N)$, we get
\begin{equation}
    \cE'_\gamma(u)[v]=(u,v)_\gamma + \int_{\R^N} (d(z)^{-\mu} \ast |u|^p)|u|^{p-2}uv \dd z.
\end{equation}
By very similar computations, we can also show that $\cE_\gamma'$ is continuous, getting the desired result.
\end{proof}

\begin{Rem}
Note that the condition \eqref{eq_cond_p} ensures that $\frac{2pN_\gamma}{2N_\gamma-\mu}\in (2,2^*_\gamma)$ and so by the Sobolev embedding in \eqref{H1gamma:emb}, $H^1_\gamma(\R^N)\hookrightarrow L^{\frac{2pN_\gamma}{2N_\gamma-\mu}}(\R^N)$. Therefore, under this restriction on $p$, the functional $\cE_\gamma$ is well defined and of class $C^1$ on $H^1_\gamma(\R^N)$.
\end{Rem}

\section{Existence}
\label{Sect:ex}
In this section, we produce a nontrivial solution to \eqref{GC}, by means of the Mountain Pass Theorem, see \cite[Theorem 2.1]{ambrosettirabinowitz}. 
We recall that we are working in the subspace of the radial functions \eqref{GS:space:radial}, so we are considering the functional $\cE_{\gamma,\rad}:H^1_{\gamma,\rad_{x,y}}(\R^N) \to \R$. For the sake of simplicity, we will continue to denote the restricted functional with $\cE_\gamma$.
We begin by showing that $\cE_\gamma$ satisfies the mountain pass geometry. 

\begin{Lem}\label{lemma_PS1}
Let $p$ satisfy \eqref{eq_cond_p} and let $u \in H^1_{\gamma,\rad_{x,y}}(\R^N) $. Then the following holds.
    \begin{enumerate}[label=(\roman*)]
		\item There exist $R>0$ and $a>0$ such that if $\|u\|_\gamma=R$, then $\cE_\gamma(u)\ge a$. 
		\item There exists $v\in H^1_{\gamma,\rad_{x,y}}(\R^N)$ such that $\|v\|_\gamma>R$ and $\cE_\gamma(v)<0$.
    \end{enumerate}

\end{Lem}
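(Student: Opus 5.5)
Both parts follow from the two ingredients already established: the Grushin Hardy--Littlewood--Sobolev bound \eqref{HLS:control:on:D} and the continuous embedding \eqref{H1gamma:emb}.

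For (i), set $s \coloneqq \frac{2pN_\gamma}{2N_\gamma-\mu}$. By the Remark following Proposition \ref{funct:reg}, condition \eqref{eq_cond_p} gives $s\in(2,2^*_\gamma)$. Hence \eqref{HLS:control:on:D} and \eqref{H1gamma:emb} combine to
\begin{equation}
    \cD(u)\le C\|u\|_s^{2p}\le C'\|u\|_\gamma^{2p}\qquad\text{for all } u\in H^1_{\gamma,\rad_{x,y}}(\R^N).
\end{equation}
Consequently
\begin{equation}
    \cE_\gamma(u)\ge \frac12\|u\|_\gamma^2-\frac{C'}{2p}\|u\|_\gamma^{2p}.
\end{equation}
Since $p>1$, the function $t\mapsto \frac12 t^2-\frac{C'}{2p}t^{2p}$ is positive for small $t>0$. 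Choose $R>0$ small enough that
\begin{equation}
    a\coloneqq \frac12R^2-\frac{C'}{2p}R^{2p}>0.
\end{equation}
This $a$ works for every $u$ with $\|u\|_\gamma=R$.

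For (ii), fix any nonzero $\varphi\in C^\infty_c(\R^N)$ that depends only on $(|x|,|y|)$, for instance $\varphi(x,y)=\psi(|x|^2)\psi(|y|^2)$ with $\psi\in C^\infty_c(\R)$ nonnegative and not identically zero. Then $\varphi\in H^1_{\gamma,\rad_{x,y}}(\R^N)$, and $\cD(\varphi)>0$ because the integrand $|\varphi(z)|^p|\varphi(w)|^p d(z-w)^{-\mu}$ is nonnegative and strictly positive on a set of positive measure. For $t>0$ we have
\begin{equation}
    \cE_\gamma(t\varphi)=\frac{t^2}{2}\|\varphi\|_\gamma^2-\frac{t^{2p}}{2p}\cD(\varphi)\longrightarrow-\infty \quad (t\to\infty),
\end{equation}
again because $2p>2$. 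Taking $t$ large enough that both $t\|\varphi\|_\gamma>R$ and $\cE_\gamma(t\varphi)<0$ hold, we set $v=t\varphi$.

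Neither step presents a genuine obstacle. The only points to verify are that the HLS exponents in Lemma \ref{lemma_HLS} are admissible, namely $r=s'=\frac{2N_\gamma}{2N_\gamma-\mu}>1$ (true since $\mu<N_\gamma$), and that $\cD(\varphi)$ is finite, which follows from the same estimate.
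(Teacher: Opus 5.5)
Your proposal is correct and follows the paper's proof essentially line by line. For (i) you combine the HLS bound with the Sobolev embedding to get $\cE_\gamma(u)\ge\frac12\|u\|_\gamma^2-\frac{C}{2p}\|u\|_\gamma^{2p}$, and for (ii) you use the scaling $t\varphi$ together with $\cD(\varphi)>0$. Requiring $\varphi$ to depend only on $(|x|,|y|)$ is a small improvement on the paper, which takes an arbitrary $\varphi\in C^\infty_c(\R^N)$ that need not lie in $H^1_{\gamma,\rad_{x,y}}(\R^N)$; the one slip is that $\frac{2N_\gamma}{2N_\gamma-\mu}>1$ follows from $\mu>0$, not from $\mu<N_\gamma$.
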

\begin{proof}
    We begin by proving $(i)$. Thanks to the Lemma \ref{lemma_HLS}, if $u\in H^1_{\gamma,\rad_{x,y}}(\R^N)$ then there exists $C_1>0$ such that $\cD(u)\le C_1\|u\|_{\frac{2pN_\gamma}{2N_\gamma-\mu}}^{2p}$. Moreover, by the embedding $H^1_{\gamma,\rad_{x,y}}(\R^N)\hookrightarrow L^{\frac{2pN_\gamma}{2N_\gamma-\mu}}(\R^N)$ in Lemma \ref{lemma_compact} there exists $C_2>0$ such that
        $
        \|u\|_{\frac{2pN_\gamma}{2N_\gamma-\mu}}\le C_2 \|u\|_\gamma
        $
        and so 
        \begin{equation}
            \cD(u)\le C \|u\|_\gamma^{2p},
        \end{equation}
        for some $C>0$. That is
        $$
            \cE_\gamma(u)= \frac12 \|u\|_\gamma^2 - \frac{1}{2p} \cD(u)\ge \frac{1}{2}\|u\|_\gamma^2 - \frac{C}{2p}\|u\|_\gamma^{2p}.
        $$
        Now, if $\|u\|_\gamma = R$,
        $$
            \cE_\gamma(u)\ge \frac{1}{2}R^2 - \frac{C}{2p}R^{2p}>0
        $$
        for $R$ sufficiently small, since $p>1$.\\
        Concerning  $(ii)$, let $\varphi\in C^\infty_c(\R^N)\setminus\{0\}$ such that $\|\varphi\|_\gamma =1$. Given $t>0$ we evaluate
        $$
            \cE_\gamma(t\varphi) = \frac{t^2}{2} - \frac{1}{2p}\cD(t\varphi) = \frac{t^2}{2}- \frac{t^{2p}}{2p}\cD(\varphi).
        $$
        Since $\cD(\varphi)>0$, there exists $t_1>0$ such that $\cE_\gamma(t\varphi)<0$ for all $t>t_1,$ concluding the proof.
\end{proof}
The next step is to verify the validity of the Palais-Smale condition.
\begin{Lem}\label{lemma_PS2}
    Let $(u_n)\subset H^1_{\gamma,\rad_{x,y}}(\R^N)$ be a Palais-Smale sequence at level $c>0$, that is 
    \begin{equation}\label{eq_PS}
        \cE_\gamma(u_n)\to c \quad  \cE'(u_n)\to 0
    \end{equation}
    as $n\to\infty$.
    The functional $\cE_\gamma$ satisfies the Palais-Smale condition.
\end{Lem}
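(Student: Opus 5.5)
The argument has two steps: first show that $(u_n)$ is bounded in $H^1_{\gamma,\rad_{x,y}}(\R^N)$, then extract a strongly convergent subsequence using the compact embedding of Lemma \ref{lemma_compact}.

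\textbf{Boundedness.} We use the standard Ambrosetti--Rabinowitz combination, which works here because the nonlocal term is $2p$-homogeneous with $2p>2$. By \eqref{eq_PS},
\begin{equation}
    \cE_\gamma(u_n)-\frac{1}{2p}\cE_\gamma'(u_n)[u_n]
    =\Big(\frac12-\frac{1}{2p}\Big)\|u_n\|_\gamma^2
    \le c+1+o(1)\|u_n\|_\gamma .
\end{equation}
Since $p>1$, the coefficient $\frac12-\frac1{2p}$ is positive. Hence $\|u_n\|_\gamma$ is bounded.

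\textbf{Compactness.} Set $r\coloneqq \frac{2pN_\gamma}{2N_\gamma-\mu}$. Condition \eqref{eq_cond_p} gives $r\in(2,2^*_\gamma)$, as noted in the Remark. Pass to a subsequence with $u_n\rightharpoonup u$ weakly in $H^1_{\gamma,\rad_{x,y}}(\R^N)$. The subspace is closed, so $u$ lies in it. By Lemma \ref{lemma_compact}, $u_n\to u$ strongly in $L^r(\R^N)$.

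Now write
\begin{equation}
    \|u_n-u\|_\gamma^2=\cE_\gamma'(u_n)[u_n-u]-\cE_\gamma'(u)[u_n-u]+\int_{\R^N}\Big[(d^{-\mu}\ast|u_n|^p)|u_n|^{p-2}u_n-(d^{-\mu}\ast|u|^p)|u|^{p-2}u\Big](u_n-u)\dd z .
\end{equation}
The three terms are handled as follows.
\begin{enumerate}[label=(\alph*)]
\item $\cE_\gamma'(u_n)[u_n-u]\to0$, because $\cE'_\gamma(u_n)\to0$ in the dual and $u_n-u$ is bounded.
\item $\cE_\gamma'(u)[u_n-u]\to0$ by weak convergence, since $\cE_\gamma'(u)$ is a fixed element of the dual.
\item For the nonlocal difference, apply Lemma \ref{lemma_HLS} with $s=\frac{2N_\gamma}{2N_\gamma-\mu}$ to each piece, and then Hölder with exponents $\frac{r}{p-1}$ and $r$:
\begin{equation}
    \Big|\int (d^{-\mu}\ast|u_n|^p)|u_n|^{p-2}u_n(u_n-u)\dd z\Big|\le C\|u_n\|_r^{p}\,\|u_n\|_r^{p-1}\,\|u_n-u\|_r .
\end{equation}
The exponent bookkeeping closes because $\frac{p-1}{r}+\frac1r=\frac pr=\frac1s$. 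The right-hand side tends to $0$, since $\|u_n\|_r$ is bounded and $\|u_n-u\|_r\to0$. The term with $u$ in place of $u_n$ is identical.
\end{enumerate}
Therefore $u_n\to u$ in $H^1_\gamma$, which is the Palais--Smale condition.

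The one point needing care is that the Palais--Smale condition is used for the restricted functional. The derivative in \eqref{eq_PS} is therefore taken in the dual of the radial space, and all test functions above ($u_n$, $u$, $u_n-u$) are radial, so this is consistent. The only genuinely nontrivial ingredient is compactness, and it is supplied entirely by Lemma \ref{lemma_compact} together with the HLS estimate.
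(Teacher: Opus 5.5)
Your proof is correct and follows the paper's route. Boundedness comes from the Ambrosetti--Rabinowitz combination $\cE_\gamma(u_n)-\frac{1}{2p}\cE_\gamma'(u_n)[u_n]$. Strong convergence then comes from the compact radial embedding of Lemma \ref{lemma_compact} together with the HLS--H\"older bound $C\|u_n\|_r^{2p-1}\|u_n-u\|_r$ on the nonlocal term, with weak convergence handling the quadratic part; your splitting through $\cE_\gamma'(u)[u_n-u]$ just repackages the paper's use of $(u,u_n-u)_\gamma\to 0$. The paper also checks that the limit is nontrivial. That is not part of the Palais--Smale condition, and it follows at once from your strong convergence, since continuity gives $\cE_\gamma(u)=c>0=\cE_\gamma(0)$.
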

\begin{proof}
    Let $(u_n)$ be as in the assumption. Our goal is to show that, up to a subsequence, $(u_n)$ converges strongly to a nontrivial limit point $u \in H^1_{\gamma,\rad_{x,y}}(\R^N)$.\\
    First, we prove that $(u_n)$ is bounded in $H^1_{\gamma,\rad_{x,y}}(\R^N)$. By \eqref{eq_PS} there exists $n_0>0$ such that
    \begin{equation}
        |\cE_\gamma'(u_n)[ u_n]|\le \|u_n\|_\gamma
    \end{equation}
    for all $n\ge n_0$ and there exists $M>0$ such that
    $$
        |\cE_\gamma(u_n)|\le M
    $$
    for all $n\in \N$.
    So we can estimate
    \begin{equation}
        M+\frac{1}{2p}\|u_n\|_\gamma \le \cE_\gamma(u_n) - \frac{1}{2p} \cE_\gamma'(u_n)[u_n] = \left( \frac12 -\frac{1}{2p}\right) \|u_n\|_\gamma^2
    \end{equation}
for all $n\ge n_0$, which implies that $\|u_n\|_\gamma$ is bounded.\\
So, up to subsequence, there exists $u\in H^1_{\gamma,\rad_{x,y}}(\R^N)$ such that $u_n\rightharpoonup u$ in $H^1_{\gamma,\rad_{x,y}}(\R^N)$ and 
    \begin{equation}\label{eq_strong}
        u_n\to u \quad\hbox{strongly in $L^q(\R^N)$ for all $q\in (2,2^*_\gamma)$},
    \end{equation}
by the compact embedding in Lemma \ref{lemma_compact}.\\ 
Now we want to prove that $u_n\to u$ strongly in $H^1_{\gamma,\rad_{x,y}}(\R^N)$. Note that
\begin{equation}
\label{eq_ps_conv}
\begin{aligned}
    0\leftarrow  \cE_\gamma'(u_n)[u_n -u] 
    &= \int_{\R^N}\nabla_\gamma u_n \nabla_\gamma(u_n-u)\dd z + \int_{\R^N} u_n (u_n-u)\dd z \\
    &\quad + \int_{\R^N\times \R^N} \frac{|u_n(w)|^p}{d(z-w)^\mu} |u_n(z)|^{p-2} u_n(z) \left( u_n(z)-u(z)  \right)\dd z\dd w,
\end{aligned}
\end{equation}
as $n\to\infty$.
Now, by Lemma \ref{lemma_HLS} with $r=s = 2N_\gamma/(2N_\gamma-\mu)$,
$$
\int_{\R^N\times \R^N} \frac{|u_n(w)|^p}{d(z-w)^\mu} |u_n(z)|^{p-2} u_n(z) \left( u_n(z)-u(z)  \right)\dd z\dd w \le C\|u_n\|_{\frac{2pN_\gamma}{2N_\gamma-\mu}}^p \|f\|_r,
$$
where $f\coloneqq|u_n|^{p-2} u_n (u_n -u)$. By H\"older's inequality with $p$ and $p' = p/(p-1)$
$$
\|f\|_r^r = \int_{\R^N} |u_n|^{r(p-1)}|u_n-u|^r \dd z \le \|u_n\|_{rp}^{r(p-1)}\|u_n-u\|_{rp}^r.
$$
So
$$
\int_{\R^N\times \R^N} \frac{|u_n(w)|^p}{d(z-w)^\mu} |u_n(z)|^{p-2} u_n(z) \left( u_n(z)-u(z)  \right)\dd z\dd w \le C \|u_n\|_{\frac{2pN_\gamma}{2N_\gamma-\mu}}^{2p-1}\|u_n-u\|_{\frac{2pN_\gamma}{2N_\gamma-\mu}}\to 0
$$
as $n\to\infty$ since $u_n\to u$ strongly in $L^{ \frac{2pN_\gamma}{2N_\gamma-\mu} }(\R^N)$, as observed in \eqref{eq_strong}. By \eqref{eq_ps_conv} follows that
\begin{equation}\label{eq_PSc1}
    \int_{\R^N}\nabla_\gamma u_n \cdot \nabla_\gamma(u_n-u)\dd z +\int_{\R^N} u_n(u_n-u)\dd z\to 0
\end{equation}
as $n\to\infty$. Moreover, by the weak convergence of $u_n\rightharpoonup u$ in $H^1_{\gamma,\rad_{x,y}}(\R^N)$ we also have that
\begin{equation}\label{eq_PSc2}
    \int_{\R^N} \nabla_\gamma u \cdot \nabla_\gamma (u_n-u)\dd z + \int_{\R^N} u (u_n-u)\dd z\to 0
\end{equation}
as $n\to \infty$. Combining \eqref{eq_PSc1} and \eqref{eq_PSc2} we get that
\begin{equation}
    \|u_n-u\|_\gamma^2\to 0,
\end{equation}
that is,
\begin{equation}
    u_n \to u \text{ strongly in } H^1_{\gamma,\rad_{x,y}}(\R^N).
\end{equation}
To conclude, we just need to verify that $u$ is a nontrivial limit point. 
Suppose by contradiction that $u=0$. By \eqref{eq_strong} $u_n \to 0$ in $L^q(\R^N)$ for every $q \in (2,2^*_\gamma)$ and by \eqref{HLS:control:on:D},
\begin{equation}
    \cD(u_n)= \int_{\R^N} (d(z)^{-\mu} \ast |u_n|^p)|u_n|^p \dd z \leq C \|u_n\|^p_{\frac{2pN_\gamma}{2N_\gamma - \mu}} \to 0, \quad \text{ as } n \to \infty.
\end{equation}
So,
\begin{equation}
    \cE_\gamma(u_n) = \frac12\|u_n\|_\gamma^2- \frac{1}{2p}\cD(u_n)  \to 0, 
\end{equation}
as $n\to\infty$, which is a contradiction with \eqref{eq_PS}, hence $u\ne 0$.
\end{proof}

We are now in position to prove Theorem \ref{thm_existence}.
\begin{proof}[Proof of Theorem \ref{thm_existence}]
By Lemma \ref{lemma_PS1} and \ref{lemma_PS2}, the hypotheses of the Mountain Pass Theorem \cite[Theorem 2.1]{ambrosettirabinowitz} are verified. So, there exists a nontrivial critical point $u\in H^1_{\gamma,\rad_{x,y}}(\R^N)$ of $\cE_\gamma$ satisfying
 \begin{equation}
 	\label{eq:13}
 	\cE_\gamma(u) =\inf_{\sigma\in\Sigma_v} 
 	 \max_{t\in [0,1]} \cE_\gamma(\sigma(t))\ge a ~\hbox{and $\cE'_\gamma(u) =0$},
 \end{equation} 
where $\Sigma_v = \{\sigma\in C\left([0,1], H^1_{\gamma,\rad_{x,y}}(\R^N)\right):\, \sigma(0) = 0, \sigma(1) = v\}$, where $a$ and $v$ are as in Lemma \ref{lemma_PS1}.\\
We now show that $u$ is in fact a critical point of $\cE_\gamma$ over the entire space $H^1_\gamma(\R^N)$ by the \emph{Principle of Symmetric Criticality} \cite[Theorem 1.28]{willem}.  Following the argument in \cite[p.~1260]{AH23} (see also \cite{AGLT24}), we denote by
\begin{equation}
    G\coloneqq \{g\in O(N)\, : g(x,y) = \left( h(x), r(y) \right)\text{with $h\in O(m)$ and $r\in O(\ell)$}  \}
\end{equation}
and the action $G\times H^1_\gamma(\R^N)\times H^1_\gamma(\R^N)$ given by
\begin{equation}
    (gu)(x,y) = u(h(x), r(y)) \quad \text{ for all } (x,y)\in \R^m\times \R^\ell,
\end{equation}
which is isometric, that is, $\|gu\|_\gamma = \|u\|_\gamma$. Finally, since the Grushin distance satisfies
\begin{equation}
    d(g(z), g(w)) = d(z,w) \text{ for all } z,w\in \R^N
\end{equation}
and $g\in G$ we have $\cE_\gamma(gu) = \cE_\gamma(u)$. And so, by \cite[Theorem 1.28]{willem}, $u$ is a critical point of $\cE_\gamma$ over $H^1_\gamma(\R^N)$.

\end{proof}

\section{Regularity}
\label{Sect:reg}
In this section we investigate the regularity of weak solutions of \eqref{GC}. The proof relies on a Brezis-Kato argument (i.e. by a truncation argument), and then by De Giorgi's iteration technique. We have been inspired by the proof given in \cite{Ambrosio}, where the author deals with the regularity of weak solutions for a quasilinear Choquard equation. 

For the sake of readability, we split the proof of the regularity result (Theorem~\ref{thm_reg}) into two lemmas. Throughout this section, we assume the hypotheses of Theorem~\ref{thm_reg}; namely, $\mu \in (0,4)$, and $p$ satisfies \eqref{eq_cond_p}.

We observe that in the case of the Laplace operator (see, for instance \cite{MVS2013}), the stronger assumption on $\mu$ is not needed. In those works, the authors establish the regularity of solutions relying on the classical Calderón–Zygmund $L^q$-regularity theory. However, in the setting of the Grushin operator, the validity of such estimates seems not trivial to obtain, which prevents us from following the same approach.

\begin{Lem} \label{lem_reg_1}
Let $u \in H^1_\gamma(\R^N)$ be a solution to \eqref{GC}. Then $u \in L^q(\R^N)$ for all $q\in [2,\infty)$. 
\end{Lem}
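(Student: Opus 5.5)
We follow the Brezis--Kato strategy of \cite{Ambrosio,MVS2013}, adapted to $\nabla_\gamma$. The idea is to treat the equation as linear in $u$ with a potential. We set $V(z) \coloneqq d(z)^{-\mu}\ast|u|^p$ and $a(z)\coloneqq |u|^{p-2}$, so that $u$ weakly solves $-\Delta_\gamma u + u = V\,|u|^{p-2}u$. We then test with truncated powers of $u$ and iterate in $L^q$ using the Sobolev inequality $\|w\|_{2^*_\gamma}\le C\|w\|_\gamma$ on $H^1_\gamma(\R^N)$.

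\textbf{Step 1: an $L^\infty$ bound on $V$.} We first show $V\in L^\infty(\R^N)$, and this is where $\mu<4$ enters. We split the kernel as $d^{-\mu}=d^{-\mu}\chi_{\{d<1\}}+d^{-\mu}\chi_{\{d\ge1\}}$. The far part is bounded by $\int|u|^p$ when $p\ge 2$; otherwise we use $|u|^p\in L^{2/p}$ together with $d^{-\mu}\chi_{\{d\ge1\}}\in L^{(2/p)'}$, which holds because $\mu(2/p)'>N_\gamma$ follows from \eqref{eq_cond_p}. Here we use that $|\{d<R\}|\sim R^{N_\gamma}$, by homogeneity of $d$ under $\delta_t$. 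The near part lies in $L^{t}$ for every $t<N_\gamma/\mu$. It therefore suffices to have $|u|^p\in L^{t'}$ for some such $t$. If the starting integrability $u\in L^{2^*_\gamma}$ is not enough, we run a two-stage argument. First we do the iteration of Step 2 using only $V\in L^{s}$ for a suitable $s$ obtained from HLS (Lemma \ref{lemma_HLS}); this improves the integrability of $u$. Then we recompute $V$. The restriction $\mu<4$ should be exactly what makes the exponents close up, since roughly one needs $N_\gamma/\mu > N_\gamma/4$-type compatibility with $2^*_\gamma$.

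\textbf{Step 2: iteration.} For $T>0$ and $\beta\ge 0$ we set $u_T\coloneqq\min\{|u|,T\}$ and test \eqref{GC:weak:formulation} with $\varphi = u\,u_T^{2\beta}\in H^1_\gamma(\R^N)$. This is legitimate because $\nabla_\gamma$ obeys the usual chain rule, having first-order components $\partial_{x_i}$ and $|x|^\gamma\partial_{y_j}$. The standard computation gives
\begin{equation}
\int_{\R^N}|\nabla_\gamma(u\,u_T^{\beta})|^2\dd z+\int_{\R^N}u^2u_T^{2\beta}\dd z\le C(1+\beta)\int_{\R^N}V\,|u|^{p}u_T^{2\beta}\dd z .
\end{equation}
We write $V|u|^p u_T^{2\beta}=V|u|^{p-2}(u\,u_T^\beta)^2$ and split $|u|^{p-2}\le K+|u|^{p-2}\chi_{\{|u|>K\}}$. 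The second piece is small in $L^{N_\gamma/2}$ for $K$ large, since $(p-2)N_\gamma/2<2^*_\gamma$ by \eqref{eq_cond_p}. Hence, by H\"older and the Sobolev inequality, it can be absorbed into the left-hand side. (If $p<2$ then $|u|^{p-2}$ is singular only where $u$ is small, but there $V|u|^{p-1}$ is bounded by $V|u|$ since $|u|^{p-1}\le |u|$ for $|u|\ge1$; we treat that region directly.) This yields
\begin{equation}
\|u\,u_T^\beta\|_{2^*_\gamma}^2\le C_\beta\|u\|_{2\beta+2}^{2\beta+2}.
\end{equation}
Letting $T\to\infty$ by monotone convergence, we obtain $u\in L^{(\beta+1)2^*_\gamma}$ whenever $u\in L^{2\beta+2}$. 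Starting from $2\beta+2=2^*_\gamma$ and iterating along the sequence $q_{k+1}=q_k\,2^*_\gamma/2$, the exponents tend to infinity. Combined with $u\in L^2$ and interpolation, this gives $u\in L^q(\R^N)$ for all $q\in[2,\infty)$.

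\textbf{Main obstacle.} The main obstacle is Step 1, that is, controlling the nonlocal potential $V$ without Calder\'on--Zygmund theory. We must make sure the HLS/Young exponents are admissible with respect to the homogeneous dimension $N_\gamma$, and that the bootstrap between the integrability of $V$ and that of $u$ actually starts under $\mu\in(0,4)$. If an $L^\infty$ bound is not available at first, we would instead run the iteration with $V\in L^{s}$ for some $s>N_\gamma/2$. By H\"older, such a potential can still be absorbed via the Sobolev inequality, and we would upgrade $V$ afterwards.
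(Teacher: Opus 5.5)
\textbf{There is a genuine gap, at the one nontrivial point of the lemma: control of the nonlocal coefficient.}

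Your Step 2 treats only $|u|^{p-2}$ as unbounded, so it presupposes $V=d^{-\mu}\ast|u|^p\in L^\infty(\R^N)$. Step 1 cannot supply this from $u\in L^2\cap L^{2^*_\gamma}$. The near-field bound needs $|u|^p\in L^{t'}$ with $t'>N_\gamma/(N_\gamma-\mu)$, i.e.\ $u\in L^q$ for some $q>pN_\gamma/(N_\gamma-\mu)$. Such a $q\le 2^*_\gamma$ exists only if $p<2(N_\gamma-\mu)/(N_\gamma-2)$. This fails on a nonempty upper part of the range \eqref{eq_cond_p} for every $\mu$, and $\mu<4$ does not help. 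For instance, $N_\gamma=10$, $\mu=2$, $p=2.2$ is admissible but requires $q>2.75>2^*_\gamma=2.5$. So Step 1 needs the output of Step 2, and Step 2 needs Step 1.

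The proposed two-stage escape is only asserted (``should be exactly what makes the exponents close up''). As stated it would not even suffice for $p>2$. There the coefficient you must split into a small $L^{N_\gamma/2}$ part plus a bounded part is $V|u|^{p-2}$, not $V$, so knowing $V\in L^s$ for \emph{some} $s>N_\gamma/2$ is not enough by H\"older. For $p<2$, your parenthetical argues on the wrong set: on $\{|u|<1\}$ one has $|u|^{p-1}\ge|u|$, not $\le$.

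\textbf{What is missing is the exponent bookkeeping through HLS.} Dualizing Lemma \ref{lemma_HLS} gives $\|V\|_s\le C\|u\|_q^p$ with $\frac1s=\frac pq+\frac{\mu}{N_\gamma}-1$, for admissible $q\in[2,2^*_\gamma]$.
\begin{itemize}
\item For $p\ge 2$: you get $V|u|^{p-2}\in L^\sigma$ with $1/\sigma$ as close as you like to $\max\{\frac{2p-2}{2^*_\gamma}+\frac{\mu}{N_\gamma}-1,\ \frac{p-2}{2^*_\gamma}\}$. Both quantities are $<2/N_\gamma$ by the upper bound in \eqref{eq_cond_p}.
\item For $p<2$, on $\{|u|\ge 1\}$: the coefficient is at most $V$. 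Then $V\in L^s$ for some $s>N_\gamma/2$ as soon as $p<\frac{2(N_\gamma+2-\mu)}{N_\gamma-2}$, which for $p<2$ is guaranteed by $\mu\le 4$. This is where $\mu<4$ actually enters in your approach.
\item For $p<2$, on $\{|u|<1\}$: no absorption is needed, since $\int_{\{|u|<1\}}V|u|^p u_T^{2\beta}\le\cD(u)<\infty$.
\end{itemize}
With this, your Step 2 closes, with absorption done at the truncated level as you do it. Step 1 is then unnecessary for this lemma; the $L^\infty$ bound on $V$ belongs to Lemma \ref{lem_reg_2}.

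For comparison, the paper never introduces the potential here. It applies Lemma \ref{lemma_HLS} directly to the tested bilinear term, pulling out $\|u\|^p_{2pN_\gamma/(2N_\gamma-\mu)}$. It then splits at a level $\delta$ and, on $\{|u|>\delta\}$, uses H\"older with exponents $\frac{2N_\gamma-\mu}{2(N_\gamma-2)}$ and $\frac{2N_\gamma-\mu}{4-\mu}$ to produce a vanishing factor $D(\delta)$. There $\mu<4$ is what makes the iteration ratio $2^*_\gamma(2N_\gamma-\mu)/(4N_\gamma)$ exceed one.
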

\begin{proof}
Without loss of generality, we can assume $u \geq 0$. Otherwise, we can separately repeat the computations for $u^+=\max\{u,0\}$ and $u^-=\max\{-u,0\}$. Let $M > 1$ and setting $u_M=\min\{u,M\}$, we consider $u_M^{2k+1},\, k \geq 0$, as a test function in \eqref{GC:weak:formulation}, that is
    \begin{equation}
        \int_{\R^N} \left(\nabla_\gamma u \cdot \nabla_\gamma u_M^{2k+1} + uu_M^{2k+1}\right) \dd z 
        - 
        \int_{\R^N} (d(z)^{-\mu} \ast |u|^p)|u|^{p-2}uu_M^{2k+1} \dd z.
    \end{equation}
    We first observe that $\nabla_\gamma u_M = \nabla_\gamma u$, by definition, and $\nabla_\gamma (u_M^{2k+1})=(2k+1)u^{2k}\nabla_\gamma(u_M)$: hence,
    \begin{equation}
    \label{nabla:gamma:int:1}
        \int_{\R^N} \nabla_\gamma u \cdot \nabla_\gamma(u_M^{2k+1}) \dd z = (2k+1)\int_{\R^N}u_M^{2k}|\nabla_\gamma u_M|^2 \dd z.
    \end{equation}
    Now, since $\nabla_\gamma(u_M^{k+1})=(k+1)u_M^k\nabla_\gamma (u_M)$, it follows that $|\nabla_\gamma(u_M^{k+1})|^2=(k+1)^2u_m^{2k}|\nabla_\gamma (u_M)|^2$, that is
    \begin{equation}
        \label{nabla:gamma:int:2}
        \int_{\R^N}u_M^{2k}|\nabla_\gamma(u_M)|^2 \dd z = \frac{1}{(k+1)^2}\int_{\R^N}|\nabla_\gamma(u_M^{k+1})|^2 \dd z.
    \end{equation}
    Therefore, by \eqref{nabla:gamma:int:1} and \eqref{nabla:gamma:int:2} we get
    \begin{equation}
    \label{first:integral:control}
        \int_{\R^N} \nabla_\gamma u \cdot \nabla_\gamma(u_M^{2k+1}) \dd z
        =
        \frac{(2k+1)}{(k+1)^2}\int_{\R^N}|\nabla_\gamma(u_M^{k+1})|^2 \dd z
        \geq
        S_\gamma\frac{(2k+1)}{(k+1)^2}\left(\int_{\R^N}|u_M^{k+1}|^{2^*_\gamma}\dd z\right)^{\frac{2}{2^*_\gamma}},
    \end{equation}
    where in the last inequality we used the Sobolev inequality for Grushin spaces (see Section \ref{Sect:preliminaries}).
    Again by definition, $u_M \leq u$, hence
    \begin{equation}
    \label{second:integral:control}
        \int_{\R^N}uu_M^{2k+1} \dd z
        \geq
        \int_{\R^N}u_Mu_M^{2k+1} \dd z
        =
        \int_{\R^N}|u_M^{k+1}|^2 \dd z.
    \end{equation}
    We deal now with the convolutive term. By Lemma \ref{lemma_HLS} (with $r=s$, hence $r=2N_\gamma/(2N_\gamma-\mu)$),
    \begin{equation}
    \label{third:integral:control}
        \begin{aligned}
            \left|\int_{\R^N}(d(z)^{-\mu} * |u|^p)|u|^{p-2}uu_M^{2k+1} \dd z \right|
            &\leq
            C\left[\int_{\R^N}|u|^{\frac{2pN_\gamma}{2N_\gamma-\mu}} \dd z
            \int_{\R^N}(|u|^{p-1}|u_M|^{2k+1})^{\frac{2N_\gamma}{2N_\gamma-\mu}} \dd z \right]^{\frac{2N_\gamma-\mu}{2N_\gamma}}\\
            & \le
            C \|u\|_{\frac{2pN_\gamma}{2N_\gamma-\mu}}^p
            \left(\int_{\R^N}|u|^{(2k+p)\frac{2N_\gamma}{2N_\gamma-\mu}} \dd z \right)^{\frac{2N_\gamma-\mu}{2N_\gamma}}\\
            & \le
            C 
            \left(\int_{\R^N}|u|^{(2k+p)\frac{2N_\gamma}{2N_\gamma-\mu}} \dd z \right)^{\frac{2N_\gamma-\mu}{2N_\gamma}}.
        \end{aligned}
    \end{equation}
    Hence, from \eqref{first:integral:control}, \eqref{second:integral:control}, and \eqref{third:integral:control} we obtain
    \begin{equation}
        \begin{aligned}
            S_\gamma\frac{(2k+1)}{(k+1)^2}\left(\int_{\R^N}|u_M^{k+1}|^{2^*_\gamma}\dd z\right)^{\frac{2}{2^*_\gamma}}
            &\leq
            S_\gamma\frac{(2k+1)}{(k+1)^2}\left(\int_{\R^N}|u_M^{k+1}|^{2^*_\gamma}\dd z\right)^{\frac{2}{2^*_\gamma}} + \int_{\R^N} |u_M^{k+1}|^2 \dd z\\
            &\leq
            C
            \left(\int_{\R^N}|u|^{(2k+p)\frac{2N_\gamma}{2N_\gamma-\mu}} \dd z \right)^{\frac{2N_\gamma-\mu}{2N_\gamma}}.
        \end{aligned}
    \end{equation}
    By the dominated convergence theorem, we can send $M \to +\infty$, obtaining
    \begin{equation}
    \label{estimate:after:limit}
        S_\gamma\frac{(2k+1)}{(k+1)^2}\left(\int_{\R^N}|u^{k+1}|^{2^*_\gamma}\dd z\right)^{\frac{2}{2^*_\gamma}}
        \leq
        C 
        \left(\int_{\R^N}|u|^{(2k+p)\frac{2N_\gamma}{2N_\gamma-\mu}} \dd z \right)^{\frac{2N_\gamma-\mu}{2N_\gamma}}.
    \end{equation}
    At this point, the idea is to work with the integral to the right-hand side to let appear the same integral on the left-hand side times a suitable multiplicative constant (i.e. less than one) in order to absorb it. To this end, let $\delta >1$, and we split the second integral into the sum where $|u| \leq \delta$ and $|u| > \delta$.\\
    Let us start with the ``far-away'' integral: by H\"{o}lder inequality (with $s=(2^*_\gamma(2N_\gamma-\mu)/(4N_\gamma)$ and $s'=(2^*_\gamma(2N_\gamma-\mu)/(2^*_\gamma(2N_\gamma-\mu)-4N_\gamma) = (2N_\gamma-\mu)/(4-\mu)$ ), because $|u| > \delta > 1$, and remembering the range of $p$, we have
    \begin{equation}
    \label{integral:far}
        \begin{aligned}
            \left(\int_{|u| > \delta}|u|^{(2k+p)\frac{2N_\gamma}{2N_\gamma-\mu}} \dd z \right)^{\frac{2N_\gamma-\mu}{2N_\gamma}}
            & =
            \left(\int_{|u| > \delta}|u|^{(p-2)\frac{2N_\gamma}{2N_\gamma-\mu}} |u|^{2(k+1)\frac{2N_\gamma}{2N_\gamma-\mu}} \dd z \right)^{\frac{2N_\gamma-\mu}{2N_\gamma}}\\
            & \leq
            \left(\int_{|u| > \delta}|u|^{\left(\frac{2N_\gamma-\mu}{N_\gamma-2}-2\right)\frac{2N_\gamma}{2N_\gamma-\mu}} |u|^{2(k+1)\frac{2N_\gamma}{2N_\gamma-\mu}} \dd z \right)^{\frac{2N_\gamma-\mu}{2N_\gamma}}\\
            & \leq
            \left(\int_{|u| > \delta}|u|^{2^*_\gamma} \dd z \right)^{\frac{4-\mu}{2N_\gamma}}
            \left(\int_{|u| > \delta}|u|^{(k+1)2^*_\gamma} \dd z \right)^{\frac{2}{2^*_\gamma}}\\
            & =:
            D(\delta)
            \left(\int_{|u| > \delta}|u|^{(k+1)2^*_\gamma} \dd z \right)^{\frac{2}{2^*_\gamma}},
        \end{aligned}
    \end{equation}
    where $D(\delta) \to 0$ as $\delta \to +\infty$, since $\mu<4$. We estimate now the ``close'' integral, hence
    \begin{equation}
    \label{integral:close}
        \begin{aligned}
            \left(\int_{|u| \leq \delta}|u|^{(2k+p)\frac{2N_\gamma}{2N_\gamma-\mu}} \dd z \right)^{\frac{2N_\gamma-\mu}{2N_\gamma}}
            & =
            \left(\int_{|u| \leq \delta}|u|^{(p-2)\frac{2N_\gamma}{2N_\gamma-\mu}} |u|^{2(k+1)\frac{2N_\gamma}{2N_\gamma-\mu}} \dd z \right)^{\frac{2N_\gamma-\mu}{2N_\gamma}}\\
            & \leq
            \delta^{p-2}\left(\int_{|u| \leq \delta} |u|^{(k+1)\frac{4N_\gamma}{2N_\gamma-\mu}} \dd z \right)^{\frac{2N_\gamma-\mu}{2N_\gamma}}.
        \end{aligned}
    \end{equation}
    Plugging \eqref{integral:far} and \eqref{integral:close} in \eqref{estimate:after:limit}
    \begin{equation}
        \begin{aligned}
            &S_\gamma\frac{(2k+1)}{(k+1)^2}\left(\int_{\R^N}|u^{k+1}|^{2^*_\gamma}\dd z\right)^{\frac{2}{2^*_\gamma}}\\
            &\quad \leq
            C
            \left[
            \delta^{p-2}\left(\int_{|u| \leq \delta} |u|^{(k+1)\frac{4N_\gamma}{2N_\gamma-\mu}} \dd z \right)^{\frac{2N_\gamma-\mu}{2N_\gamma}}
            +
            D(\delta)
            \left(\int_{|u| > \delta}|u|^{(k+1)2^*_\gamma} \dd z \right)^{\frac{2}{2^*_\gamma}}
            \right]\\
            & \quad \leq   
            C
            \left[
            \delta^{p-2}\left(\int_{\R^N} |u|^{(k+1)\frac{4N_\gamma}{2N_\gamma-\mu}} \dd z \right)^{\frac{2N_\gamma-\mu}{2N_\gamma}}
            +
            D(\delta)
            \left(\int_{\R^N}|u|^{(k+1)2^*_\gamma} \dd z \right)^{\frac{2}{2^*_\gamma}}
            \right].
        \end{aligned}
    \end{equation}
    Now we can choose $\delta>1$ large enough such that
    \begin{equation}
        0 < D(\delta) < \frac{\theta S_\gamma}{C}\frac{(2k+1)}{(k+1)^2}, \quad \text{for some}~\theta \in (0,1),
    \end{equation}
    hence
    \begin{equation}
            (1-\theta)S_\gamma\frac{(2k+1)}{(k+1)^2}\left(\int_{\R^N}|u^{k+1}|^{2^*_\gamma}\dd z\right)^{\frac{2}{2^*_\gamma}}
            \leq
            C
            \delta^{p-2}\left(\int_{\R^N} |u|^{(k+1)\frac{4N_\gamma}{2N_\gamma-\mu}} \dd z \right)^{\frac{2N_\gamma-\mu}{2N_\gamma}},
    \end{equation}
    therefore,
    \begin{equation}
        \|u\|_{(k+1)2^*_\gamma}^{2(k+1)} 
        \leq 
        \frac{1}{S_\gamma}C(\delta,p,\gamma)\frac{(k+1)^2}{(2k+1)} \|u\|_{(k+1)\frac{4N_\gamma}{2N_\gamma-\mu}}^{2(k+1)},
    \end{equation}
    that is
    \begin{equation}
    \label{eq_bootstrap}
        \|u\|_{(k+1)2^*_\gamma} 
        \leq 
        \frac{1}{S_\gamma^{2(k+1)}}\tilde{C}(\delta,p,\gamma)\frac{(k+1)^{\frac{1}{k+1}}}{(2k+1)^{\frac{1}{2(k+1)}}} \|u\|_{(k+1)\frac{4N_\gamma}{2N_\gamma-\mu}}.
    \end{equation}
    Set $r\coloneqq 4N_\gamma/(2N_\gamma-\mu)$. Note that $2^*_\gamma > r$ thanks to the condition $\mu<4$. From the latter inequality, we can now start with a boostrap argument: since $u\in L^{2^*_\gamma}(\R^N)$ we can apply \eqref{eq_bootstrap} with $k+1 = 2^*_\gamma/r$ to get that $u\in L^{(k+1)2^*_\gamma}(\R^N) = L^{(2^*_\gamma)^2/r}(\R^N)$. Repeating the same argument, we
    obtain that $u \in L^q(\R^N)$ for every $q \in [2^*_\gamma,\infty)$, hence $u \in L^q(\R^N)$ for every $q \in [2,\infty)$.
    \end{proof}

\begin{Lem} \label{lem_reg_2}
Let $u \in H^1_\gamma(\R^N)$ be a solution to \eqref{GC}. Then $u \in L^\infty(\R^N)$.
\end{Lem}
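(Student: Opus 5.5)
The plan is to first show that the nonlocal potential $V\coloneqq d(z)^{-\mu}\ast|u|^p$ is bounded, and then to run a Moser (De Giorgi type) iteration on a linear equation with a bounded coefficient.

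\textbf{Boundedness of $V$.} Since $u\in L^q(\R^N)$ for every $q\in[2,\infty)$ by Lemma~\ref{lem_reg_1}, we have $|u|^p\in L^s(\R^N)$ for every $s\in[2/p,\infty)$. We split the kernel as
\[
d(z)^{-\mu}=d(z)^{-\mu}\mathbbm{1}_{\{d<1\}}+d(z)^{-\mu}\mathbbm{1}_{\{d\ge1\}}.
\]
The Grushin balls satisfy $|\{d<R\}|=cR^{N_\gamma}$, because Lebesgue measure scales like $t^{N_\gamma}$ under the dilations $\delta_t$. Hence $d^{-\mu}\mathbbm{1}_{\{d<1\}}\in L^{t}$ for every $t<N_\gamma/\mu$, and $d^{-\mu}\mathbbm{1}_{\{d\ge1\}}\in L^{t}$ for every $t>N_\gamma/\mu$ (which includes $t=\infty$). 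By Young's inequality,
\[
\|V\|_\infty\le \|d^{-\mu}\mathbbm{1}_{\{d<1\}}\|_{t_1}\,\||u|^p\|_{t_1'}+\|d^{-\mu}\mathbbm{1}_{\{d\ge1\}}\|_{t_2}\,\||u|^p\|_{t_2'} .
\]
We choose $t_1<N_\gamma/\mu<t_2$ with $t_2$ finite and so large that $t_2'\ge 2/p$. We also need $t_1'$ to be admissible: $t_1'$ is finite provided $t_1>1$, which is possible since $N_\gamma/\mu>1$. Therefore $V\in L^\infty(\R^N)$.

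\textbf{Moser iteration.} With $V$ bounded, $u$ (assumed $\ge0$, as in Lemma~\ref{lem_reg_1}) is a weak solution of
\[
-\Delta_\gamma u+u=a(z)u,\qquad a\coloneqq V|u|^{p-2}.
\]
The coefficient $a$ is not bounded a priori when $p<2$, so instead we keep the right-hand side in the form $V u^{p-1}$ with $V\in L^\infty$. We test with $u_M^{2k+1}$ exactly as in Lemma~\ref{lem_reg_1} and use the Sobolev inequality for Grushin spaces. This gives
\[
\|u\|_{(k+1)2^*_\gamma}^{2(k+1)}\le C(k+1)\,\|V\|_\infty\int_{\R^N}u^{2k+p}\dd z .
\]
We then estimate $\int u^{2k+p}\le \int_{\{u\le1\}}u^{2}\,u^{2k+p-2}+\int_{\{u>1\}}u^{2k+2}$. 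For $p\ge2$ the first piece is at most $\|u\|_2^2$; when $p<2$ we use instead $u^{2k+p}\le u^2+u^{2k+2}$, which holds for $2k+p\ge2$. This yields
\[
\|u\|_{(k+1)2^*_\gamma}^{2(k+1)}\le C(k+1)\bigl(1+\|u\|_{2(k+1)}^{2(k+1)}\bigr).
\]
Set $\chi=2^*_\gamma/2>1$ and $q_j=2\chi^j$, i.e. $k_j+1=\chi^j$. The estimate becomes
\[
\max\{1,\|u\|_{q_{j+1}}\}\le \bigl(C\chi^j\bigr)^{1/(2\chi^j)}\max\{1,\|u\|_{q_j}\}.
\]
Since $\sum_j j\chi^{-j}<\infty$, the product of these factors converges. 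So $\|u\|_{q_j}$ stays bounded uniformly in $j$, and letting $j\to\infty$ gives $u\in L^\infty(\R^N)$.

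\textbf{Main obstacle.} The delicate point is keeping the constants polynomial in $k$ while handling the term $u^{2k+p}$ uniformly in $k$. The bound $V\in L^\infty$ is what removes the $k$-dependent absorption step (the choice of $\delta$) used in Lemma~\ref{lem_reg_1}, so I expect this step to go through without further restrictions.
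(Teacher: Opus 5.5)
\textbf{The gap: the superlinear case $p>2$.} The Moser computation leading to $\|u\|_{(k+1)2^*_\gamma}^{2(k+1)}\le C(k+1)\|V\|_\infty\int_{\R^N}u^{2k+p}\dd z$ is fine. The next estimate, however, fails for $p>2$, and the hypotheses allow this: since $\mu<4$, \eqref{eq_cond_p} admits every $p\in\left(2,\frac{2N_\gamma-\mu}{N_\gamma-2}\right)$.

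For such $p$, the bound $\int_{\{u>1\}}u^{2k+p}\le\int_{\{u>1\}}u^{2k+2}$ is false, because $u^{2k+p}=u^{p-2}u^{2k+2}>u^{2k+2}$ on $\{u>1\}$. So the recursion $\|u\|_{(k+1)2^*_\gamma}^{2(k+1)}\le C(k+1)\bigl(1+\|u\|_{2(k+1)}^{2(k+1)}\bigr)$ is not established there. Your closing claim that $V\in L^\infty$ removes any absorption step is therefore unjustified: when $p>2$ the effective coefficient $Vu^{p-2}$ is unbounded a priori. You only flagged the case $p<2$.

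The step can be repaired in either of two ways.
\begin{enumerate}[label=(\alph*)]
\item Iterate the inequality you actually have, $\|u\|_{(k+1)2^*_\gamma}^{2(k+1)}\le C(k+1)\|u\|_{2k+p}^{2k+p}$, along $q_{j+1}=\frac{2^*_\gamma}{2}(q_j-p+2)$ with $q_0$ large, which Lemma \ref{lem_reg_1} allows. This gives
$\log\max\{1,\|u\|_{q_{j+1}}\}\le\frac{\log(Cq_j)}{q_j-p+2}+\left(1+\frac{p-2}{q_j-p+2}\right)\log\max\{1,\|u\|_{q_j}\}$.
Since $q_j$ grows geometrically, both the series and the infinite product converge.
\item Bound $\int u^{p-2}u^{2(k+1)}$ by H\"older, using $u^{p-2}\in L^s$ with $s>N_\gamma/2$. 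Then interpolate $L^{2s'}$ between $L^2$ and $L^{2^*_\gamma}$ and absorb with $\varepsilon\sim(k+1)^{-1}$. This keeps the constants polynomial in $k$.
\end{enumerate}

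\textbf{Two smaller corrections.}
\begin{enumerate}[label=(\roman*)]
\item In the bound on $V$ you need $pt_i'\ge 2$ for $i=1,2$. For $t_1$ this means taking $t_1$ close to $1$. For $t_2$ when $p<2$ it means $t_2\le 2/(2-p)$, so $t_2$ must be close to $N_\gamma/\mu$, not large: large $t_2$ gives $t_2'\to 1<2/p$. The interval $(N_\gamma/\mu,\,2/(2-p)]$ is nonempty because \eqref{eq_cond_p} gives $p>2-\mu/N_\gamma>2-2\mu/N_\gamma$. With this fix, your kernel splitting is actually more careful than the paper's far-field bound $I_2\le\|u\|_p^p$. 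That bound presupposes $u\in L^p$, which Lemma \ref{lem_reg_1} does not provide when $p<2$.
\item For $p<2$, the inequality $u^{2k+p}\le u^2+u^{2k+2}$ fails at $k=0$. Your iteration must therefore start at some $k_0$ with $2k_0+p\ge 2$, which is again legitimate by Lemma \ref{lem_reg_1}.
\end{enumerate}

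\textbf{Comparison with the paper.} The paper also first proves $K=d^{-\mu}\ast|u|^p\in L^\infty$, but it then runs a De Giorgi scheme rather than a Moser iteration. It normalizes $v=u/(\varrho\|u\|_r)$ and tests with $w_k=(v-1+2^{-k})^+$. It uses only the growth bound $|\Psi(z,u)|\le C(1+|u|^{r-1})$ with $2<r<2^*_\gamma$, and obtains $U_{k+1}\le C^k(\varrho\|u\|_r)^{r^2/2-r}U_k^{1+r/N_\gamma}$. This treats sublinear and superlinear $p$ uniformly through the choice of $\varrho$. Your Moser route, once repaired as above, is a valid alternative reaching the same conclusion.
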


    \begin{proof}
Let $K(z)\coloneqq d(z)^{-\mu}\ast |u|^p$. We first prove that $K\in L^\infty(\R^N)$.
We write
\begin{equation}
\label{eq_bound_K}
K(z) = \int_{\R^N} \frac{|u(w)|^p}{d(z-w)^\mu}\,\dd w = \int_{d(z-w)<1} \frac{|u(w)|^p}{d(z-w)^\mu}\,\dd w + \int_{d(z-w)\ge1} \frac{|u(w)|^p}{d(z-w)^\mu}\,\dd w \coloneqq I_1 + I_2.  
\end{equation}
By H\"{o}lder's inequality
\begin{equation}
\label{eq_bound_K1}
    I_1 \le \left( \int_{d(z-w)<1} d(z-w)^{-\mu s} \dd w \right)^{\frac{1}{s}} \left( \int_{d(z-w)<1} |u(w)|^{ps'} \dd w\right)^{\frac{1}{s'}}
\end{equation}
which is bounded for every $z \in \mathbb{R}^N$ provided that $\mu s < N_\gamma$ and $s' p \ge 2$. We observe that these choices are feasible. Indeed, if $p \ge 2$, the only requirement is $s \in \left(1,N_\gamma/\mu\right)$. If instead $p \in (1,2)$, we may choose $
s \in \left(1,\; \min\left\{N_\gamma/\mu,\; 2/(2-p)\right\}\right).
$\\
Moreover
\begin{equation}
\label{eq_bound_K2}
    I_2 \le \int_{d(z-w)>1} |u(w)|^p\dd w \le \|u\|_{p}^p<+\infty.
\end{equation}
Combining \eqref{eq_bound_K1} and \eqref{eq_bound_K2} in \eqref{eq_bound_K} we get that $K\in L^\infty(\R^N)$.
So $u$ satisfies $-\Delta_\gamma u = \Psi(z,u)\coloneqq - u + K(z) |u|^{p-1}$ with
\begin{equation}
    |\Psi(z,u)|\le
    \begin{cases}
        C (1+|u|^{r-1})& \quad \hbox{if $p\in (1,2)$,~ for every $r>2$,}\\
        C (1+ |u|^{p-1})&\quad \hbox{if $p\ge 2$.}
    \end{cases}
\end{equation}
We proceed with the case $p\in (1,2)$, the other is analogous.
Define $\varrho \ge \max\{ 1,\|u\|_r^{-1} \}$ and set
$$
v\coloneqq \frac{u}{ \varrho \|u\|_r},\quad  w_k\coloneqq (v-1+2^{-k})^+,\, k\in \N\cup\{0\}.
$$
Note that $w_k\in H^1_\gamma(\R^N)$ and $0\le w_{k+1}\le w_k$ a.e. in $\R^N$. Let also $U_k\coloneqq \|w_k\|_r^r$
and finally, we define 
$$\Omega_k\coloneqq \{ z\,: w_k(z)>0 \} = \{z\,: v(z) >1 - 2^{-k} \}.$$
Observe that the following properties hold true.
\begin{enumerate}[label=(\roman*)]
    \item $w_k>2^{-(k+1)}$ in $\Omega_{k+1}$. In fact, $v> 1- 2^{-(k+1)}$ in $\Omega_{k+1}$,  and
    $$
    w_k = v - 1 + 2^{-k} \ge 1- 2^{-(k+1)} -1 +2^{-k} = 2^{-(k+1)}.
    $$
    \item $|\Omega_{k+1}|\le 2^{(k+1)r} U_k$. By (i) we have
    $$
    |\Omega_{k+1}| = \int_{\Omega_{k+1}}1\dd z \le \int_{\Omega_{k+1}} \left( \frac{w_k}{2^{-(k+1)}} \right)^r\dd z \le 2^{(k+1)r}\int_{\Omega_{k+1}}|w_k|^r\dd z \le U_k 2^{(k+1)r}.
    $$
    \item $w_{k+1}\le 2^{-(k+1)}$ in $\Omega_{k+1}\cap \{u\le 1\}$. In fact, if $u\le 1$ then $v\le u \le 1$ and
    $$
    w_{k+1}\le (1-1+2^{-(k+1)})^+ = 2^{-(k+1)}.
    $$
    \item $v<(2^{k+1}+1)w_{k}$ in $\Omega_{k+1}$. By (i) we have that $1<w_k 2^{k+1}$ in $\Omega_{k+1}$ and so
    $$
    v = w_k +1 - 2^{-k} < w_k(2^{k+1}+1)-2^{-k} < w_k(2^{k+1}+1).
    $$
\end{enumerate}
In the rest of the proof, $C$ denotes a generic constant independent of $k$, which may change from line to line.
Now we test the equation $-\Delta_\gamma u = \Psi(z,u)$ with $w_{k+1}$. Noting that, for all $k$, $\nabla_\gamma w_{k+1} = \nabla_\gamma v$ in $\Omega_{k+1}$ and $\nabla_\gamma w_{k+1}=0$ in $\R^N\setminus\Omega_{k+1}$ we get
\begin{align*}
    \| \nabla_\gamma w_{k+1}\|_2^2 &= \int_{\R^N} \nabla_\gamma v \cdot  \nabla_\gamma w_{k+1} = \left( \varrho \|u\|_r \right)^{-1} \int_{\R^N} \nabla_\gamma u \cdot \nabla_\gamma w_{k+1}\\
    & = \left( \varrho \|u\|_r \right)^{-1}\int_{\R^N} \Psi(z,u) w_{k+1}\dd z = \left( \varrho \|u\|_r \right)^{-1}\int_{\Omega_{k+1}} \Psi(z,u) w_{k+1}\dd z  \\
    & \le C  \left( \varrho \|u\|_r \right)^{-1} \left( \int_{ \Omega_{k+1}\cap \{u\le 1\} } w_{k+1}\dd z + \int_{ \Omega_{k+1}\cap \{u> 1\} } u^{r-1} w_{k+1}\dd z  \right)\\
      & \le C  \left( \varrho \|u\|_r \right)^{-1} \left( \int_{ \Omega_{k+1}\cap \{u\le 1\} } w_{k+1}\dd z + \int_{ \Omega_{k+1}\cap \{u> 1\} } u^{r-1} w_{k}\dd z  \right)
\end{align*}
by the growth condition of $\Psi$ and the inequality $w_{k+1}\le w_k$. \\
We now estimate the first integral by (iii) and (ii) as
\begin{equation}
    \label{est_int_1}
    \int_{ \Omega_{k+1}\cap \{u\le 1\} } w_{k+1}\dd z\le 2^{-(k+1)}|\Omega_{k+1}|\le 2^{(k+1)(r-1)} U_k\le \left(\varrho\|u\|_r \right)^{r-1}2^{(k+1)(r-1)} U_k
\end{equation}
since $\varrho\|u\|_r>1$ by definition of $\varrho$. We estimate the second integral by (iv) as
\begin{equation}
    \label{est_int_2}
    \begin{aligned}
        \int_{ \Omega_{k+1}\cap \{u> 1\} } u^{r-1} w_{k}\dd z &=\left(\varrho \|u\|_r \right)^{r-1} \int_{\Omega_{k+1}\cap \{u>1\}} v(z)^{r-1} w_k \\
        &
        \le \left(\varrho \|u\|_r \right)^{r-1}(2^{k+1}+1)^{r-1} U_k \\
        &\le C\left(\varrho \|u\|_r \right)^{r-1}(2^{k+1})^{r-1} U_k.
    \end{aligned}
\end{equation}
So combining \eqref{est_int_1} and \eqref{est_int_2} we get
\begin{equation}
    \label{eq_nablawk}
\|\nabla_\gamma w_{k+1}\|_2^2 \le C \left( \varrho\|u\|_r \right)^{r-2} 2^{(k+1)(r-1)} U_k.
\end{equation}
Now, by H\"{o}lder's and Sobolev inequality
\begin{equation}
    \label{eq:uk+1}
U_{k+1} = \int_{\Omega_{k+1}} |w_{k+1}|^r \le \|w_{k+1}\|_{2^*_\gamma}^r |\Omega_{k+1}|^{1-\frac{r}{2^*_\gamma}}\le C \|\nabla_\gamma w_{k+1}\|_2^{r}|\Omega_{k+1}|^{1-\frac{r}{2^*_\gamma}}.
\end{equation}
Note that, again by (ii) we have
\begin{equation}
    \label{est_uk}
    |\Omega_{k+1}|^{1-\frac{r}{2^*_\gamma}} = |\Omega_{k+1}|^{1-\frac{r}{2} + \frac{r}{N_\gamma}} \le 2^{(k+1)r \left( 1-r/2^*_\gamma\right)} U_k^{1-\frac{r}{2} + \frac{r}{N_\gamma}}
\end{equation}
and so combining \eqref{eq_nablawk} and \eqref{est_uk} in \eqref{eq:uk+1} 
$$
U_{k+1}\le C|\Omega_{k+1}|^{1-\frac{r}{2^*_\gamma}} \left(\left( \varrho\|u\|_r \right)^{r-2} 2^{(k+1)(r-1)}\right)^{\frac{r}{2}} = C^k \left(\varrho\|u\|_r \right)^{\frac{r^2}{2}-r} U_k^{1+\frac{r}{N_\gamma}}
$$
for some $C>1$ not depending on $k$. \\
Let $\eta\coloneqq C^{-\frac{N_\gamma}{r}}\in (0,1)$. Choosing
$$
\varrho\coloneqq \max\left\lbrace 1,\|u\|_r, \left( \|u\|_r^{r^2/2- r} \eta^{-1}\right)^{\frac{1}{\xi}}\right\rbrace,
$$
where $\xi\coloneqq \frac{r^2}{N_\gamma} + r -\frac{r^2}{2}>0$, since we can choose $r<2^*_\gamma$.\\
We now prove by induction that $U_k\le \frac{\eta^k}{\varrho^r}$ for all $k\in\N\cup\{0\}$. Clearly,
$$
U_0 = \|v^+\|_r^r \le \|v\|_r^r = \frac{1}{\varrho^r}.
$$
Suppose now $U_k\le \frac{\eta^k}{\varrho^r}$. We estimate
\begin{align}
U_{k+1} &\le C^k \left( \varrho \|u\|_r \right)^{ -r} \left( \frac{\eta^k}{\varrho^r} \right)^{1+\frac{r}{N_\gamma}} = \frac{\eta^k}{\varrho^r} \varrho^{-\left( \frac{r^2}{N_\gamma} + r - \right)} \|u\|_r^{r^2/2-2}
\\
&\le \frac{\eta^k}{\varrho^r} \left(\|u\|_r^{r^2/2 -r}\eta^{-1} \right)^{-1} \|u\|_r^{r^2/2-r}    = \frac{\eta^{k+1}}{\varrho^r}. 
\end{align}
So we have that $U_k\to 0$ as $k\to\infty$. Since $w_k\to (v-1)^+$ a.e. in $\R^N$ as $k\to\infty$ and $w_k\le v\in L^r(\R^N)$, it follows by the dominated convergence theorem that $\|(v-1)^+\|_r = 0$. So $v\le 1$ a.e. in $\R^N$ and then $\|u\|_\infty\le \varrho\|u\|_r$, concluding the proof. 
\end{proof}
We can now present the proof of Theorem \ref{thm_reg}.
\begin{proof}[Proof of Theorem \ref{thm_reg}]
    By Lemmas \ref{lem_reg_1} and \ref{lem_reg_2} we know that $u\in L^q(\R^N)$ for all $q\in [2,\infty]$. The local (H\"{o}lder-) continuity of the solution $u$ comes from the nonhomogeneous Harnack inequality for $X$-elliptic operators established in \cite[Theorem 5.5]{gutierrezlanconelli} 
	as already observed in \cite{KL12}, see also \cite{Lascialfari_2002}.
\end{proof}

\section{Poho\v zaev identity and nonexistence}\label{Sect:nex}
In this section, we first establish a Poho\v zaev identity for smooth solutions to \eqref{GC}. As a consequence, we obtain the nonexistence result stated in Theorem~\ref{thm_nonexistence}.

As anticipated in the introduction, to obtain the Poho\v zaev identity, we follow the classical argument in \cite{MVS2015}, where the idea is to test the equation with the function $z \cdot \nabla u$. However, we need to adapt the argument to the Grushin setting: instead of $(x,y)\cdot \nabla u$, we use $ (x,(1+\gamma)y)\cdot \nabla u$ in order to recover the correct scaling in the estimates. 

The proof of the Poho\v zaev identity relies on \cite[Appendix A]{YuYangTang}: however, the novelty here is the presence of the convolution term, which has been treated adapting the argument of \cite[Proposition 3.1]{MVS2013}. For the sake of completeness, we report the whole detailed proof.

\begin{Prop}
\label{prop_Pohozaev}
    Let $u \in H^1_\gamma(\R^N) \cap L^\frac{2pN_\gamma}{2N_\gamma-\mu}(\R^N)$ be a weak solution to \eqref{GC}. If $u\in C^2(\R^N)$ then
$$
    \frac{N_\gamma-2}{2}\int_{\R^N} |\nabla_\gamma u|^2 \dd z + \frac{N_\gamma}{2}\int_{\R^N} |u|^2 \dd z = \frac{2N_\gamma -\mu}{2p}\int_{\R^N} \left( d(z)^{-\mu}\ast |u|^p \right) |u|^p \dd z.
$$
\end{Prop}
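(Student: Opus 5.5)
We plan to follow the strategy of \cite[Proposition 3.1]{MVS2013}, adapted to the anisotropic dilations $\delta_t$. Define the Grushin dilation vector field $Z(z)\coloneqq (x,(1+\gamma)y)$ and a cut-off $\varphi\in C^1_c(\R^N)$ with $\varphi\equiv1$ near the origin. Set $\varphi_\lambda(z)=\varphi(\lambda z)$ for $\lambda>0$. Since $u\in C^2$, the function $v_\lambda\coloneqq \varphi_\lambda\, Z\cdot\nabla u$ is $C^1$ with compact support, so it lies in $H^1_\gamma(\R^N)$ and is an admissible test function in \eqref{GC:weak:formulation}. We compute each of the three terms of $\cE_\gamma'(u)[v_\lambda]$ and then let $\lambda\to0$.

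\textbf{Local terms.} The mass term uses $u\,Z\cdot\nabla u=\tfrac12 Z\cdot\nabla(u^2)$ and $\divv Z=m+(1+\gamma)\ell=N_\gamma$. Integrating by parts gives $-\tfrac12\int (N_\gamma\varphi_\lambda+Z\cdot\nabla\varphi_\lambda)u^2$. Since $|Z\cdot\nabla\varphi_\lambda|\le C$ uniformly in $\lambda$ and tends to $0$ pointwise, dominated convergence yields $-\tfrac{N_\gamma}{2}\|u\|_2^2$. The gradient term relies on the commutator identities $[\partial_{x_i},Z\cdot\nabla]=\partial_{x_i}$ and $[|x|^\gamma\partial_{y_j},Z\cdot\nabla]=(1+\gamma)|x|^\gamma\partial_{y_j}-\gamma|x|^\gamma\partial_{y_j}=|x|^\gamma\partial_{y_j}$. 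The second follows from $Z\cdot\nabla|x|^\gamma=\gamma|x|^\gamma$, and together they express that $\nabla_\gamma$ has homogeneous degree $1$ under $\delta_t$. Hence $\nabla_\gamma u\cdot\nabla_\gamma(Z\cdot\nabla u)=|\nabla_\gamma u|^2+\tfrac12 Z\cdot\nabla|\nabla_\gamma u|^2$. One more integration by parts against $\varphi_\lambda$ gives $(1-\tfrac{N_\gamma}{2})\int\varphi_\lambda|\nabla_\gamma u|^2$ plus error terms. The errors involve $\nabla_\gamma\varphi_\lambda$ and $Z\cdot\nabla\varphi_\lambda$; they are bounded by $C\int_{\mathrm{supp}\nabla\varphi_\lambda}|\nabla_\gamma u|^2$ and tend to $0$. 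The delicate error is $\int(\nabla_\gamma u\cdot\nabla_\gamma\varphi_\lambda)Z\cdot\nabla u$, which needs $|Z|\,|\nabla_\gamma\varphi_\lambda|$ controlled anisotropically. To handle it, we intend to choose $\varphi$ as a function of the Grushin norm $d(z)$, i.e.\ $\varphi=\psi(d(z))$ adapted to $\delta_t$, and $\varphi_\lambda=\varphi\circ\delta_\lambda$. Then $Z\cdot\nabla u$ pairs with $\nabla_\gamma\varphi_\lambda$ in the homogeneous way, and everything is bounded by $C\int_{\{d\sim1/\lambda\}}|\nabla_\gamma u|^2$. This part follows \cite[Appendix A]{YuYangTang}, so we would quote it.

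\textbf{Nonlocal term (main obstacle).} This is the term we expect to be hardest. Write $|u|^{p-2}u\,Z\cdot\nabla u=\tfrac1p Z\cdot\nabla|u|^p$ and symmetrize the double integral in $z,w$, obtaining
$$\frac{1}{2p}\iint\frac{Z(z)\cdot\nabla|u|^p(z)\varphi_\lambda(z)|u(w)|^p+Z(w)\cdot\nabla|u|^p(w)\varphi_\lambda(w)|u(z)|^p}{d(z-w)^\mu}.$$
Integrating by parts in each variable produces $-\tfrac{2N_\gamma}{2p}\cD(u)$ from the divergence $N_\gamma$. It also produces the term $\tfrac{1}{2p}\iint|u(z)|^p|u(w)|^p\,\big(Z(z)\cdot\nabla_z+Z(w)\cdot\nabla_w\big)d(z-w)^{-\mu}$, with $\varphi_\lambda$ inserted. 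The key fact is that $Z(z)\cdot\nabla_z+Z(w)\cdot\nabla_w$ acting on a function of $z-w$ equals $(Z\cdot\nabla)$ applied at $z-w$, because $Z$ is linear. Since $d$ is $\delta_t$-homogeneous of degree $1$, $Z\cdot\nabla d^{-\mu}=-\mu d^{-\mu}$, so this term gives $+\tfrac{\mu}{2p}\cD(u)$. The cut-off errors have the form $\iint\frac{|u(z)|^p|u(w)|^p}{d(z-w)^\mu}|Z\cdot\nabla\varphi_\lambda|$ together with a term with $\varphi_\lambda(z)-\varphi_\lambda(w)$. All of these are dominated by $C\,|u(z)|^p|u(w)|^pd(z-w)^{-\mu}$, which is integrable by Lemma \ref{lemma_HLS} and the assumption $u\in L^{2pN_\gamma/(2N_\gamma-\mu)}$, and they vanish pointwise as $\lambda\to0$. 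The technical point to check is that the homogeneous pairing keeps the bound uniform; the approach of \cite[Proposition 3.1]{MVS2013} handles exactly this.

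Summing the three limits and using $\cE_\gamma'(u)[v_\lambda]=0$ gives $-\tfrac{N_\gamma-2}{2}\|\nabla_\gamma u\|_2^2-\tfrac{N_\gamma}{2}\|u\|_2^2+\tfrac{2N_\gamma-\mu}{2p}\cD(u)=0$. This is the claimed identity.
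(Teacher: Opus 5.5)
You follow the paper's proof. The test function is the same, $\varphi_\lambda\,\hat z\cdot\nabla u$ with $Z=\hat z=(x,(1+\gamma)y)$. The mass term is handled the same way via $\divv\hat z=N_\gamma$, and the Choquard term is symmetrized and closed with the same Euler identity $\nabla d(z-w)\cdot(\hat z-\hat w)=d(z-w)$.

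For the gradient term your version is cleaner, and more rigorous than the paper's. The relations $[X_k,Z\cdot\nabla]=X_k$ collapse the paper's $J_1,\dots,J_4$ into the single identity $\nabla_\gamma u\cdot\nabla_\gamma(Z\cdot\nabla u)=|\nabla_\gamma u|^2+\tfrac12 Z\cdot\nabla|\nabla_\gamma u|^2$. You also integrate by parts with the cut-off still in place, whereas the paper removes it first. The $\delta_\lambda$-adapted cut-off $\varphi_\lambda=\psi(\lambda d)$ is genuinely needed. The weights $|x|^{2\gamma}x$ and $|x|^{\gamma}y$ carried by $\nabla_\gamma\varphi_\lambda$ absorb the $|x|^{-\gamma}$ hidden in $y\cdot\nabla_y u=|x|^{-\gamma}y\cdot(|x|^\gamma\nabla_y u)$. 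This gives $|(\nabla_\gamma u\cdot\nabla_\gamma\varphi_\lambda)(Z\cdot\nabla u)|\le C\lambda d\,|\psi'(\lambda d)|\,|\nabla_\gamma u|^2$. With the paper's Euclidean $\varphi(\lambda z)$, the cut-off error in $J_3$ is bounded only by $|\nabla_x u|\,|\nabla_y u|$ on the annulus, which $H^1_\gamma$ does not control near $\{x=0\}$.

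The step that fails as stated is your claim that the convolution term containing $\varphi_\lambda(z)-\varphi_\lambda(w)$ is dominated by $C|u(z)|^p|u(w)|^p d(z-w)^{-\mu}$. The paper passes over the same point with a bare appeal to dominated convergence. With $\zeta=z-w=(\xi,\eta)$, this term carries the kernel $d(\zeta)^{-\mu-1}(\varphi_\lambda(z)-\varphi_\lambda(w))\,Z(w)\cdot\nabla d(\zeta)$. The argument of \cite{MVS2013} works because $|\nabla|\zeta||=1$, so the Lipschitz bound $|x\varphi(\lambda x)-y\varphi(\lambda y)|\le C|x-y|$ suffices. Here, instead, $|\nabla_y d(\zeta)|$ can be as large as $d(\zeta)^{-\gamma}$, while $\varphi_\lambda(z)-\varphi_\lambda(w)$ contains a piece of size $\lambda|\xi|$ from the $x$-increment.

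Concretely, take $w$ in the annulus $\{\lambda d\in\mathrm{supp}\,\psi'\}$ and $|\xi|=\epsilon$, $|\eta|=\epsilon^{1+\gamma}$, suitably oriented. Then $(\varphi_\lambda(z)-\varphi_\lambda(w))\,Z(w)\cdot\nabla d(\zeta)\approx c\,(\lambda\epsilon)^{-\gamma}\epsilon$ with $c\neq 0$. So near the diagonal the kernel behaves like $\lambda^{-\gamma}d(\zeta)^{-\mu-\gamma}$, not like $d(\zeta)^{-\mu}$. Any $\lambda$-independent dominant must therefore carry the weight $d(w)^{\gamma}$ and the more singular kernel $d(z-w)^{-\mu-\gamma}$. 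That kernel is not even locally integrable if $\mu+\gamma\ge N_\gamma$, and the hypotheses on $u$ do not control such a weighted integral. The same $d^{-\gamma}$ growth of $\nabla d$ also affects the preliminary integrations by parts in one variable against $d(z-w)^{-\mu}$.

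The leading part is odd under $\xi\mapsto-\xi$, so there is cancellation to exploit. Still, a genuinely anisotropic argument is required at this point, and neither your proposal nor the paper supplies it.
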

\begin{proof}
Take $\varphi\in C^1_c(\R^N)$ such that $\varphi\equiv 1$ on $B_1$. Given $z = (x,y)\in \R^N$ we denote by $\hat z = (x, (1+\gamma)y)$ and we consider 
$$
    v_\lambda(z) \coloneqq \varphi(\lambda z)\hat z\cdot \nabla u (z), \quad \lambda>0,
$$
as a test function in \eqref{GC:weak:formulation}, getting
\begin{equation}
    \label{eq_test}
    0=\int_{\R^N}\nabla_\gamma u\cdot \nabla_\gamma v_\lambda\dd z + \int_{\R^N} u v_\lambda\dd z - \int_{\R^N} \left( d(z)^{-\mu}\ast |u|^p \right)|u|^{p-2}u v_\lambda\dd z \eqqcolon I_1 + I_2 - I_3.
\end{equation}
We now carefully evaluate each term. Note that we can write $\Delta_\gamma u = \divv(A_\gamma\nabla u)$, where $A_\gamma$ is the $N\times N$ matrix given by 
$$
    A_\gamma=
    \begin{pmatrix}
        I_{m\times m} & 0 \\
        0 & |x|^{2\gamma } I_{\ell\times\ell},
    \end{pmatrix}
$$
where $I$ is the identity matrix. Noting that $A_\gamma \nabla u\cdot \nabla v = \nabla_\gamma u\cdot\nabla_\gamma v $, by the divergence theorem we can write
$$
    \int_{\R^N}\Delta_\gamma u v_\lambda\dd z = \int_{\R^N}\divv(A_\gamma\nabla u) v_\lambda \dd z = -\int_{\R^N} A_\gamma\nabla u\cdot \nabla v_\lambda \dd z = -\int_{\R^N} \nabla_\gamma u\cdot \nabla_\gamma v_\lambda \dd z = - I_1.
$$
We get
\begin{align}
    -I_1 &= \int_{\R^N} v_\lambda \Delta_\gamma u \dd z = \int_{\R^N}\varphi(\lambda z) \left(\hat z\cdot \nabla u\right) \Delta_\gamma u\dd z \\&= \int_{\R^N} \varphi(\lambda z) \left(x\cdot \nabla_x u + (1+\gamma) y\cdot \nabla_y u  \right)\left( \Delta_x u + |x|^{2\gamma}\Delta_y u \right)\dd z \\ &=
    \int_{\R^N} \varphi(\lambda z) \left(x\cdot \nabla_x u \right)\Delta_x u\dd z+ \int_{\R^N} \varphi(\lambda z)|x|^{2\gamma}\left(x\cdot \nabla_x u \right)\Delta_y u\dd z \\ &\quad\quad + (1+\gamma) \int_{\R^N}\varphi(\lambda z) \left(y\cdot \nabla_y u \right)\Delta_x u \dd z + (1+\gamma) \int_{\R^N} \varphi(\lambda z) |x|^{2\gamma}\left(y\cdot \nabla_y u \right)\Delta_y u\dd z\\
    &\eqqcolon J_1 + J_2 + (1+\gamma)J_3 +(1+\gamma) J_4.
\end{align}
We now compute each term $J_i$ separately. The calculations are fairly standard; however, we report them for completeness and in order to make explicit how the Grushin terms appear.\\
For each term we follow essentially the same procedure: we first apply the divergence theorem, then use the dominated convergence theorem to remove the term involving $\varphi$. We then evaluate the remaining gradient term and finally apply the divergence theorem once more. To simplify notation, we indicate the limits $\lim_{\lambda\to 0}J_i$, just with $J_i$, for all $i=1,\dots 4$.\\
By the divergence theorem and the dominated convergence theorem we get
\begin{align}
    J_1 &=- \int_{\R^N}
    \nabla_x u \cdot \nabla_x (\varphi(\lambda z) (x \cdot \nabla_x u))\dd z  \\ 
    &=-\int_{\R^N}\lambda (x\cdot \nabla_x u) \nabla_x u\cdot \nabla_x\varphi(\lambda z)\dd z -\int_{\R^N}\varphi(\lambda z) \nabla_x u \cdot \nabla_x \left(x\cdot \nabla_x u \right)\dd z\\
    &\to -\int_{\R^N} \nabla_x u \cdot \nabla_x \left(x\cdot \nabla_x u \right)\dd z
\end{align}
as $\lambda\to 0$.
Noting that
\begin{equation}\label{est_grad_1}
    \nabla_x u \cdot \nabla_x \left(x\cdot \nabla_x u \right) = |\nabla_x u|^2 + \sum_{i,k = 1}^m x_i \frac{\partial^2 u}{\partial x_i \partial x_k} \frac{\partial u}{\partial x_k} = |\nabla_x u|^2 + \frac12 x\cdot \nabla_x \left( |\nabla_x u|^2 \right)
\end{equation}
again by the divergence theorem
$$
    \int_{\R^N} \nabla_x u \cdot \nabla_x \left(x\cdot \nabla_x u \right)\dd z = \int_{\R^N}|\nabla_x u|^2 \dd z -\frac12 \int_{\R^N}\divv_x(x)|\nabla_x u|^2\dd z =\left(1-\frac{m}{2}  \right)\int_{\R^N}|\nabla_x u|^2\dd z,
$$
leading to the estimate
\begin{equation}\label{eq_J1}
    J_1 = \left(\frac{m}{2}-1\right)\int_{\R^N}|\nabla_x u|^2 \dd z.
\end{equation}
Regarding $J_2$, again by the divergence theorem
\begin{align*}
    J_2 &= -\int_{\R^N}|x|^{2\gamma}  \nabla_y u \cdot \nabla_y \left( \varphi(\lambda z) (x\cdot \nabla_x u)\right)\dd z. 
    \\ & =\int_{\R^N}|x|^{2\gamma}  \nabla_y u \cdot \left[\lambda\nabla_y \varphi(\lambda z)(x\cdot \nabla_x u) + \varphi(\lambda z)\nabla_y(x\cdot \nabla_x u)\right] \dd z
\end{align*}
and, passing to the limit as $\lambda\to 0$ we get
\begin{equation}
    J_2 = -\int_{\R^N} |x|^{2\gamma} \nabla_y u\cdot \nabla_y (x\cdot \nabla_x u)\dd z.
\end{equation}
We compute
\begin{equation}\label{est_grad_2}
    \nabla_y u\cdot \nabla_y\left(x\cdot \nabla_x u \right) = \sum_{j=1}^\ell \frac{\partial u}{\partial y_j}\left(\sum_{i=1}^m x_i \frac{\partial^2 u}{\partial x_i \partial y_j} \right) = \frac12 \sum_{i=1}^m x_i \frac{\partial}{\partial x_i}\left(\sum_{j=1}^\ell \left\lvert \frac{\partial u}{\partial y_j} \right\rvert^2 \right)= \frac12 x\cdot \nabla_x (\left\lvert\nabla_y u \right\rvert^2),
\end{equation}
and so by the divergence theorem
$$
    J_2 = -\frac{1}{2}\int_{\R^N} |x|^{2\gamma} x\cdot \nabla_x (\left\lvert\nabla_y u \right\rvert^2)\dd z = \frac{1}{2}\int_{\R^N} |\nabla_y u|^2 \divv_x\left( |x|^{2\gamma} x \right)\dd z.
$$
Clearly $\divv_x\left( |x|^{2\gamma} x \right) = 2\gamma |x|^{2\gamma} + m |x|^{2\gamma}$, that is
\begin{equation}\label{eq_J2}
    J_2  = \left(\frac{m}{2} +\gamma \right)\int_{\R^N} |x|^{2\gamma} |\nabla_y u|^2 \dd z. 
\end{equation}
We now evaluate $J_3$.
\begin{align}
    J_3 &= -\int_{\R^N}\nabla_x u\cdot \left(\lambda\nabla\varphi(\lambda z) (y\cdot \nabla_y u) \right)\dd z - \int_{\R^N}\varphi(\lambda z) \nabla_x u \cdot \nabla_x(y\cdot \nabla_y u)\dd z  \\
    &\to -\int_{\R^N} \nabla_x u \cdot \left(\nabla_x (y\cdot \nabla_y u) \right)\dd z,
\end{align}
as $\lambda\to 0$.
Arguing as in \eqref{est_grad_2}, $\nabla_x u \cdot \nabla_x(y\cdot \nabla_y u)  = \frac12 y\cdot \nabla_y \left(|\nabla_x u|^2 \right)$, and so, by the divergence theorem,
\begin{equation}
    \label{eq_J3}
    J_3 =- \frac12 \int_{\R^N} y\cdot \nabla_y |\nabla_x u|^2\dd z =\frac12 \int_{\R^N} |\nabla_x u|^2 \divv_y(y) \dd z = \frac{\ell}{2}\int_{\R^N}|\nabla_x u|^2 \dd z.
\end{equation}
We can finally compute $J_4$. Reasoning as before, letting $\lambda\to 0$ and using \eqref{est_grad_1} in the $y$ variable, we have
\begin{align}
    J_4 
    &=
    -\int_{\R^N} |x|^{2\gamma} \nabla_y u\cdot \nabla_y (y\cdot \nabla_y u)\dd z 
    \\& 
    = -\int_{\R^N} |x|^{2\gamma}|\nabla_y u|^2 \dd z - \frac12 \int_{\R^N}|x|^{2\gamma} y\cdot \nabla_y \left(|\nabla_y u|^2\right) \dd z 
    \\&
    = -\int_{\R^N} |x|^{2\gamma}|\nabla_y u|^2 \dd z  + \frac12\int_{\R^N} |\nabla_y u|^2 \divv_y(|x|^{2\gamma}  y) 
    \\&
    =\left(\frac{\ell}{2} -1 \right) \int_{\R^N} |x|^{2\gamma}|\nabla_y u|^2 \dd z. \label{eq_J4}
\end{align}
Putting together \eqref{eq_J1}, \eqref{eq_J2}, \eqref{eq_J3}, and \eqref{eq_J4}, we get that
\begin{equation}
    \label{eq_I1}
    -I_1 = \frac12(N_\gamma-2)\left( \int_{\R^N}|\nabla_x u|^2 \dd z +\int_{\R^N}|x|^{2\gamma}|\nabla_y u|^2 \dd z \right) = \frac{N_\gamma-2}{2}\int_{\R^N}|\nabla_\gamma u|^2\dd z.
\end{equation}
Regarding the term $I_2$, by the divergence theorem,
\begin{equation}
    I_2 = \int_{\R^N} u(z) \varphi(\lambda z)\hat z\cdot \nabla u(z)\dd z = \int_{\R^N} \varphi(\lambda z) \hat z\cdot \nabla\left( \frac{|u|^2}{2} \right)\dd z = -\int_{\R^N} \frac{|u|^2}{2}\divv(\varphi(\lambda z) \hat z)\dd z.
\end{equation}
Now, since $\divv(\hat z) = N_\gamma$, we get
\begin{equation}\label{eq_div}
    \operatorname{\divv} (\varphi(\lambda z) \hat z) = N_\gamma \varphi(\lambda z ) + \lambda \nabla\varphi(\lambda z)\cdot\hat z    
\end{equation}
and so
$$
    I_2 = -\int_{\R^N} \left(N_\gamma \varphi(\lambda z) + \lambda\hat z \cdot \nabla \varphi(\lambda z)   \right) \frac{|u(z)|^2}{2}\dd z.
$$
Letting $\lambda\to 0$ by the dominated convergence theorem, we conclude that
\begin{equation}\label{eq_I2}
    I_2\to - \frac{N_\gamma}{2}\int_{\R^N}|u|^2 \dd z.
\end{equation}
We now evaluate the convolution term $I_3$.
\begin{align}
    I_3 
    &=
    \int_{\R^N}\left( d(z)^{-\mu}\ast |u|^p  \right)|u|^{p-2} u \varphi(\lambda z)\hat z\cdot \nabla u(z)\dd z
    \\&
   =\int_{\R^N\times \R^N} |u(w)|^p d(z-w)^{-\mu}\varphi(\lambda z) \hat{z}\cdot \nabla_z\left( \frac{|u|^p}{p}\right)(z) \dd z \dd w
   \\&
   = \frac{1}{2}\int_{\R^N\times \R^N} d(z-w)^{-\mu} |u(w)|^p \varphi(\lambda z) \hat z \cdot \nabla_z\left( \frac{|u|^p}{p}\right)(z)\dd z \dd w 
   \\&
   \quad \quad+ \frac{1}{2}\int_{\R^N\times \R^N} d(z-w)^{-\mu} |u(z)|^p \varphi(\lambda w) \hat w\cdot \nabla_w\left( \frac{|u|^p}{p}\right)(w) \dd z \dd w 
   \eqqcolon \frac12\cI_1 +\frac12 \cI_2.
\end{align}
By the divergence theorem
\begin{align}\label{eq_calI1}
    \cI_1 &= \int_{\R^N\times \R^N}d(z-w)^{-\mu} |u(w)|^p\varphi(\lambda z) \hat z\cdot \nabla_z\left(\frac{|u|^p}{p}\right)(z)\dd z \dd w \\&= -\int_{\R^N\times \R^N}\frac{|u(z)|^p}{p} |u(w)|^p\divv ( d(z-w)^{-\mu} \varphi(\lambda z) \hat z)\dd z \dd w   
\end{align}
since the divergence is with respect to the variable $z$. We calculate using \eqref{eq_div}
$$
    \divv \left( d(z-w)^{-\mu} \varphi(\lambda z) \hat z \right) = d(z-w)^{-\mu} \left( N_\gamma \varphi(\lambda z) + \lambda \nabla \varphi(\lambda z) \cdot \hat{z} \right) + \varphi(\lambda z) \nabla_z \left( d(z-w)^{-\mu} \right) \cdot \hat{z}.
$$
Combining this in \eqref{eq_calI1} we get
$$
    \begin{aligned}
        \cI_1
        = -\int_{\R^N\times \R^N} \frac{|u(z)|^p}{p}|u(w)|^p 
        \Big[ &\, d(z-w)^{-\mu} \left( N_\gamma \varphi(\lambda z)
              + \lambda \nabla \varphi(\lambda z) \cdot \hat{z} \right) \\
        &+ \varphi(\lambda z)\,
        \nabla_z \left(d(z-w)^{-\mu}\right)\cdot \hat z
        \Big] \, \dd z \, \dd w
    \end{aligned}
$$
and with the same calculations
$$
    \begin{aligned}
        \cI_2
        = -\int_{\R^N\times \R^N} \frac{|u(w)|^p}{p}|u(z)|^p
        \Big[ &\, d(z-w)^{-\mu} \left( N_\gamma \varphi(\lambda w)
              + \lambda \nabla \varphi(\lambda w) \cdot \hat{w} \right) \\
        &+ \varphi(\lambda w)\,
        \nabla_w \left(d(z-w)^{-\mu}\right)\cdot \hat w
        \Big] \, \dd z \, \dd w .
        \end{aligned}
$$
Now note that
$$
    \nabla_w \left( d(z-w)^{-\mu} \right) = -\mu d(z-w)^{-\mu-1}\nabla_w\left(d(z-w) \right) = \mu d(z-w)^{-\mu-1}\nabla_z\left(d(z-w) \right)
$$
and so
\begin{align}
    \cI_1 + \cI_2
    & = - \frac{2}{p} \int_{\mathbb{R}^N\times \mathbb{R}^N} |u(w)|^p |u(z)|^p d(z-w)^{-\mu} (N_\gamma \varphi(\lambda z) + \hat z\cdot \nabla \varphi(\lambda z) ) \dd z \dd w
    \\&
    + \frac{\mu}{p} \int_{\mathbb{R}^N\times \mathbb{R}^N} |u(w)|^p |u(z)|^p \left[ d(z-w)^{-\mu-1}\nabla_z(d(z-w) \cdot \left(   \hat z \varphi(\lambda z) - \hat w \varphi(\lambda w) \right)\right]\dd z \dd w.
\end{align}
Now we want to let $\lambda\to 0$. By a straightforward calculation we have
\begin{equation}
    \nabla_z (d(z-w))\cdot (\hat z - \hat w) = d(z-w)
\end{equation}
and then by the dominated convergence theorem
\begin{align}
 \cI_1+ \cI_2 \to 
 -\frac{2 N_\gamma}{p}\int_{\R^N\times \R^N} d(z-w)^{-\mu} |u(w)|^p |u(z)|^p +\frac{\mu}{p}\int_{\R^N\times \R^N} d(z-w)^{-\mu} |u(w)|^p |u(z)|^p,
\end{align}
as $\lambda \to 0$.\\
That is,
\begin{equation}
    \label{eq_I3}
    I_3 = \int_{\R^N}\left( d(z)^{-\mu}\ast |u|^p  \right)|u|^{p-2} u v_\lambda\dd z  \to -\frac{2N_\gamma-\mu}{2p} \int_{\R^N}\left( d(z)^{-\mu}\ast |u|^p\right) |u|^p\dd z.
\end{equation}
as $\lambda\to 0$.
Plugging \eqref{eq_I1}, \eqref{eq_I2}, and \eqref{eq_I3} in \eqref{eq_test} we can conclude.
\end{proof}
We can now prove the nonexistence result, showing that if $p$ doesn't satisfy the assumption \eqref{eq_cond_p}, then the only weak solution to \eqref{GC} is the trivial one. 
\begin{proof}[Proof of Theorem \ref{thm_nonexistence}]
Testing equation \eqref{GC:weak:sol:def} with $u$ we get 
$$
    \int_{\R^N}|\nabla_\gamma u|^2 \dd z +\int_{\R^N} |u|^2\dd z = \int_{\R^N} \left(  d(z)^{-\mu}\ast |u|^p\right) |u|^p\dd z,
$$
and, by Proposition \ref{prop_Pohozaev},   
$$
    \left( \frac{N_\gamma-2}{2} - \frac{2N_\gamma-\mu}{2p}  \right)\int_{\R^N}|\nabla_\gamma u|^2\dd z  +\left( \frac{N_\gamma}{2} - \frac{2N_\gamma-\mu}{2p} \right)\int_{\R^N} |u|^2\dd z =0.
$$
So, if $\frac{1}{p}\le \frac{N_\gamma-2}{2N_\gamma-\mu}$ or $\frac{1}{p}\ge \frac{N_\gamma}{2N_\gamma-\mu}$ we have $\int_{\R^N}|\nabla_\gamma u|^2\dd z   = \int_{\R^N} |u|^2\dd z = 0$, that is $u\equiv 0$. 
\end{proof}

\section*{Acknowledgments}
\noindent
The authors are members of \emph{Gruppo Nazionale per l’Analisi Matematica, la Probabilità e le loro Applicazioni} (GNAMPA) of the \emph{Istituto Nazionale di Alta Matematica} (INdAM) and are partially supported by INdAM-GNAMPA Project 2026 titled \textit{Structural degeneracy and criticality in (sub)elliptic PDEs} (CUP E53C25002010001).

\bibliography{Bibliography}
\bibliographystyle{abbrv}

\end{document}